\newenvironment{enumroman}{\begin{enumerate}[\upshape (i)]}
                                                {\end{enumerate}}
\theoremstyle{plain}
\newtheorem{theorem}[equation]{Theorem}
\newtheorem{corollary}[equation]{Corollary}
\newtheorem{prop}[equation]{Proposition}
\newtheorem{proposition}[equation]{Proposition}
\newtheorem{lemma}[equation]{Lemma}
\newtheorem{convention}[equation]{Convention}
\theoremstyle{definition}
\newtheorem{definition}[equation]{Definition}
\newtheorem{example}[equation]{Example}
\newtheorem{remark}[equation]{Remark}
\newtheorem{remarks}[equation]{Remarks}
\newtheorem*{thank}{Acknowledgments}
\numberwithin{equation}{section}
\def \A {\mathcal {A}}
\def \B {\mathcal {B}}
\def \C {\mathcal {C}}
\def \D {\mathcal {D}}
\def \I {\mathcal {I}}
\def \M {\mathcal {M}}
\def \P {\mathcal{P}}
\def \s {\mathcal{S}}
\def \V {\mathcal {V}}
\def \W {\mathcal {W}}
\newcommand{\Ab}{\mathcal{A}b}
\newcommand{\colim}{\operatornamewithlimits{colim}}
\newcommand{\Deltaop}{\Delta^{\op}}
\newcommand{\Fun}{\operatorname{Fun}}
\newcommand{\hocofiber}{\operatorname{hocofiber}}
\newcommand{\hocolim}{\operatornamewithlimits{hocolim}}
\newcommand{\holim}{\operatornamewithlimits{holim}}
\newcommand{\Hom}{\operatorname{Hom}}
\newcommand{\Id}{\operatorname{id}}
\newcommand{\Map}{\operatorname{Map}}
\newcommand{\ob}{\operatorname{ob}}
\newcommand{\op}{\operatorname{op}}
\newcommand{\Sets}{\mathcal{S}ets}
\newcommand{\Top}{\mathcal Top}
\title{Enriched functor categories for functor calculus}
\author[Bandklayder]{Lauren Bandklayder}
\address{Max Planck Institute for Mathematics, Bonn, Germany}
\email{lbandklayder@gmail.com}
\author[Bergner]{Julia E. Bergner}
\address{Department of Mathematics, University of Virginia, Charlottesville, VA, USA}
\email{jeb2md@virginia.edu}
\author[Griffiths]{Rhiannon Griffiths}
\address{Department of Mathematics, Cornell University, Ithaca, NY, USA}
\email{rhiannon.griffiths@cornell.edu}
\author[Johnson]{Brenda Johnson}
\address{Department of Mathematics, Union College, Schenectady, NY, USA}
\email{johnsonb@union.edu}
\author[Santhanam]{Rekha Santhanam}
\address{Department of Mathematics, IIT\ Bombay, Powai, Mumbai, India}
\email{reksan@iitb.ac.in}
\thanks{This work was done as part of the Women in Topology Workshop in August 2019, supported by the Hausdorff Research Institute for Mathematics, NSF grant DMS 1901795, the AWM ADVANCE grant NSF-HRD-1500481, and Foundation Compositio Mathematica. LB was supported by the Max Planck Institute for Mathematics in Bonn. JB was partially supported by NSF grants DMS-1659931 and DMS-1906281. BJ was partially supported by the Union College Faculty Research Fund.} 
\keywords{enriched categories, functor categories, functor calculus}
\subjclass[2020]{18D20; 18D15; 18F50; 18N40; 55U35}
\begin{document}

\begin{abstract}
    In this paper we present background results in enriched category theory and enriched model category theory necessary for developing model categories of enriched functors suitable for doing functor calculus.  
\end{abstract}

\maketitle

\tableofcontents

\section{Introduction}

Functor calculi are important tools in algebraic topology that have been used to produce significant results in a wide range of  fields. For example, the homotopy functor calculus of Goodwillie \cite{goodwillie3} has had applications in algebraic $K$-theory \cite{mccarthy} and $v_n$-periodic homotopy theory \cite{am}, \cite{heuts}.  The orthogonal calculus of Goodwillie and Weiss \cite{gw}, \cite{weissorth} and manifold calculus of Weiss \cite{weissemb} have been used to study embeddings and immersions of manifolds, characteristic classes, and spaces of knots; for example, see \cite{ltv}, \cite{rw}, and \cite{sinha}.  

Each functor calculus provides a means of approximating a functor $F$ between particular types of categories with a tower of functors under $F$
\[ \xymatrix{&&&F\ar[llld]\ar[lld]\ar[ld]\ar[d]\ar[dr]\ar[drr]&&\\P_0F&P_1F\ar[l]&\dots\ar[l]&P_nF\ar[l]&P_{n+1}F\ar[l]&\dots\ar[l]} \]
that is analogous to a  Taylor series for a function, with each $P_nF$ being in some sense a degree $n$ approximation to $F$. Such towers are referred to as \emph{Taylor towers}.  The functor calculi also provide means to classify ``homogeneous degree $n$" functors (degree $n$ functors with trivial degree $n-1$ approximations), which arise as the layers of the towers (homotopy fibers between consecutive terms) in terms of more tractable objects. Because of these classification results, it is often easier to identify the layers in a Taylor tower than it is to identify the $P_nF$'s.  By building appropriate model structures on functor categories, these polynomial approximations can be interpreted as fibrant replacements for functors.  This point of view has been developed by Barnes and Oman \cite{bo} in the case of the orthogonal calculus, and by Biedermann, Chorny, and R\"ondigs \cite{bcr}, \cite{br} and by Barnes and Eldred \cite{be1} in the case of the homotopy functor calculus.  This perspective makes it possible to upgrade the classifications of homogeneous degree $n$ functors from equivalences of homotopy categories to Quillen equivalences between model categories.  This perspective has also led to means by which different functor calculi can be extended to new contexts \cite{taggart} and compared to one another \cite{be2}, \cite{taggartcomp}; see also \cite{lurie} and \cite{pereira}.

This paper grew from a desire to apply this model category-theoretic approach to the discrete functor calculus of Bauer, McCarthy, and the fourth-named author \cite{bjm}. Such model structures have been identified for abelian versions of this type of calculus by Renaudin \cite{renaudin} and Richter \cite{richter}.  Using similar techniques, it is straightforward to establish the existence of such model structures for the discrete calculus, but we are interested in developing these structures in a simplicially enriched context.  

Our motivation for doing so is inspired by the work of Biedermann and R\"ondigs \cite{br}.  They develop a simplicially enriched version of Goodwillie's homotopy functor calculus in such a way that their model structure for $n$-excisive functors is cofibrantly generated.  Because this additional structure is quite powerful, as we develop a ``degree $n$" model structure for discrete functor calculus, we want to employ a similar strategy.

However, following this strategy requires a good understanding of enriched functor categories, and many of the proofs of the results we need can be difficult to find explicitly in the literature.  In this paper, we aim to bring together these results in a relatively self-contained treatment, with an eye toward recognizing the common features between these two kinds of functor calculus.  This paper can thus be regarded as the preparatory work to developing the cofibrantly generated model structures that will be the main results of the sequel \cite{sequel}.  

As an application of these ideas, and as further preparation for that work, we show that one of the basic building blocks for discrete functor calculus, the comonad $\bot_n$, has an isomorphic simplicial representable version $\bot_n^R$, and that the existence of this version and a corresponding construction for the Goodwillie calculus developed in \cite{br} are both consequences of a more general result concerning a construction we refer to as the \emph{evaluated cotensor}.  We show, that although $\V$-enriched functor categories $\Fun(\C,\D)$ are not generally enriched in $\Fun(\C,\V)$, the evaluated cotensor behaves quite like an ordinary cotensor. The category $\Fun(\C,\D)$ therefore enjoys much of the structure of a category enriched, tensored, and contensored in $\Fun(\C,\V)$.  

For each flavor of functor calculus, both the original definition and its variant in terms of representables have distinct advantages; however, the latter is more amenable to working in an enriched setting, as is our goal here.   For the functor $T_n$ from homotopy functor calculus and the functor $\bot_n$ from discrete functor calculus, we show that the two definitions agree and, as the main result of the last section of this paper, prove that the representable variants of the functors $T_n$ and $\bot_n$ are both simplicial functors.   With this structure in place, we are well-positioned to recover the cofibrantly generated $n$-excisive model structure on Goodwillie's functor calculus from \cite{br}, and establish similar cofibrantly generated model structures for degree $n$ functors in the context of a broader class of functor calculi, including discrete calculus in \cite{sequel}.

\subsection*{Organization of the paper}

In Section \ref{sectionenriched} we provide fundamental background material on enriched categories.  We recall the definition of a $\V$-category, or category enriched in $\V$, and describe the additional structures of a $\V$-category being tensored or cotensored in $\V$.

In Section \ref{sectionyoneda} we provide an enriched version of the classical Yoneda Lemma and its dual.  This section includes the definitions of representable functors and ends and coends of certain bifunctors in the context of enriched categories.  Aside from being necessary for the Yoneda Lemma, these constructions are used repeatedly throughout this paper and its sequel. We also generalize the definition of an end to construct the ``evaluated cotensor,'' which is a bifunctor $\Fun(\C,\V)^{\op} \times \Fun(\C,\D) \rightarrow \D$ described by Biedermann and R\"ondigs in \cite{br}.

Motivated by applications to functor calculus, we specialize to enriched functor categories in Section \ref{sectionfunctors}. We show that, given $\V$-categories $\C$ and $\D$ with $\C$ small, the category $\Fun(\C,\D)$ of $\V$-functors from $\C$ to $\D$ can be viewed as a category enriched in $\V$.  We also give sufficient conditions for when $\Fun(\C, \D)$ is tensored or cotensored over $\V$ and establish some properties of the evaluated cotensor.

In Section \ref{sectionmodel}, we consider $\V$-categories with the additional structure of a $\V$-model category.  In particular, we give conditions on $\V$-categories $\C$ and $\D$ under which $\Fun(\C,\D)$ with the projective model structure is a $\V$-model category.

Starting in Section \ref{sectioncotensor}, we restrict to the case where $\V=\s$, the category of simplicial sets, and so work in the simplicial context.  We recall the formal definitions of homotopy limits and colimits in a simplicial model category, and use them to show how the evaluated cotensor interacts with these constructions.

We conclude in Section \ref{sectionfunctorcalc} with an application of these enriched category techniques to functor calculus. We begin by revisiting the construction of a version of Goodwillie's functor $T_n$ in terms of representable functors, as developed by Biedermann and R\"ondigs in \cite{br}, in Section \ref{tnrep}.  In Section \ref{discreterep}, we then develop the analogous construction for the functor $\bot_n$ that plays a similar role for discrete functor calculus.  Building on the similarities of the two constructions, in Section \ref{tnsimp} we prove that both $T_n$ and $\bot_n$ define simplicial functors.

\begin{thank} 
This paper was written as part of the Women in Topology Workshop in August 2019.  The authors would like to thank Ang\'elica Osorno and Sarah Whitehouse for organizing this workshop, as well as the Hausdorff Institute for its hospitality.  We would also like to thank David Barnes, Georg Biedermann, Nick Kuhn, Lyne Moser, Birgit Richter, and Stefan Schwede for conversations related to this project.
\end{thank}

\section{Enriched categories} \label{sectionenriched} 

In this section, we give some background results on categories enriched in a suitable monoidal category. These definitions are standard and can be found in \cite{kelly}, \cite{riehl2} and \cite{hovey}. 

\begin{definition} \cite[\S 1.1]{kelly} 
A \emph{monoidal category} $(\V, \otimes, I)$ is a category $\V$ equipped with a bifunctor $- \otimes - \colon \mathcal V \times \V \rightarrow \V$ sending a pair of objects $(V,W)$ to an object $V \otimes W$, and a unit object $I$, satisfying appropriate associativity and unit axioms.
\end{definition}

Given monoidal categories $\mathcal V$ and $\W$, one can consider \emph{monoidal functors} that preserve the monoidal structure \cite[4.1.2]{hovey}, and likewise \emph{monoidal natural transformations} \cite[4.1.3]{hovey}.

\begin{definition} \cite[\S 1.4]{kelly} 
A monoidal category $\V$ is \emph{symmetric} if, for any objects $V$ and $W$ of $\V$, there is an isomorphism $t \colon V \otimes W \rightarrow W \otimes V$ such that $t^2=\Id$ and various compatibility axioms are satisfied.
\end{definition}

\begin{definition}\cite[\S 1.5]{kelly}  
A symmetric monoidal category $\V$ is \emph{closed} if it is equipped with a bifunctor $(-)^{(-)} \colon \V \times \V^{\op} \rightarrow \V$ sending a pair of objects $(W,V)$ to $W^V$, such that given objects $U$, $V$, and $W$, there is a natural isomorphism
\[ \Hom_\V(U, W^V) \cong \Hom_\V(U \otimes V, W). \]
\end{definition}

The object $W^V$ is sometimes referred to as an \emph{internal hom object}, since we think of it as a mapping object which itself lives in the category $\V$.

\begin{example}  
The category $\Ab$ of abelian groups with the usual tensor product is a closed symmetric monoidal category in which the internal hom object $B^A$ between two abelian groups $A$ and $B$ is taken to be the abelian group of homomorphisms between $A$ and $B$. 
\end{example}

A primary example in this paper is the category of simplicial sets.

\begin{example} \label{ssetissymmon} 
Let $\Delta$ denote the standard simplex category, whose objects are finite ordered sets $[n]=\{0 \leq \cdots \leq n\}$ and whose morphisms are given by order-preserving maps of sets. The category $\s$ of simplicial sets, or functors $\Deltaop \rightarrow \Sets$, with the usual cartesian product is a closed symmetric monoidal category, where the internal hom object between two simplicial sets $U$ and $V$ is  the simplicial set $V^U$ whose $n$-simplices are given by 
\[ (V^U)_n:=\Hom_{\s}(U\times \Delta[n],V). \]
Here, recall that $\Delta[n]$ is the standard $n$-simplex, given by the representable functor $\Hom_{\Delta}(-,[n])$.
\end{example}

The following definition is a generalization of a closed monoidal category, in that we consider other categories $\D$ with hom objects in $\V$.

\begin{definition} \label{closedsymmon} \cite[\S 1.2]{kelly} 
Let $(\V, \otimes, I)$ be a monoidal category. A category $\D$ \emph{enriched in} $\V$, or a $\V$-\emph{category}, consists of a collection $\ob(\D)$ of objects of $\D$, together with an object $\Map_\D(X,Y)$ of $\V$ for every pair $X, Y \in \ob(\D)$, such that for each $X \in \ob(\D)$ there is an identity morphism
\[ i_X \colon I \rightarrow \Map_\D(X,X) \] 
in $\V$, and for each triple $X,Y,Z \in \ob(\D)$ there is a composition morphism 
\[ \circ_{XYZ} \colon \Map_\D(X,Y) \otimes \Map_\D(Y,Z) \rightarrow \Map_\D(X,Z) \]
in $\V$ satisfying appropriate associativity and unit axioms.
\end{definition}

We sometimes denote the mapping object $\Map_\D(X,Y)$ by $\Map(X,Y)$ when there is no ambiguity about the ambient category $\D$.

It is often helpful to distinguish between a $\V$-enriched category and its underlying ordinary category, which we now define.

\begin{definition}\label{underlyingcat}
The \emph{underlying category} of a $\V$-category $\D$ is the category $\D_0$ with the same objects as $\D$, $\Hom_{\D_0}(X,Y) = \Hom_{\V} \left(I, \Map_{\D}(X,Y) \right)$ and composition and identities induced by the composition and identity morphisms in $\D$.
\end{definition}

In particular, when we say that an ordinary category $\C$ is a $\V$-category, we mean that there exists a $\V$-category for which the underlying category is $\C$.  

When $\V$ is a closed monoidal category, the category $\V$ is itself enriched in $\V$, via the internal hom object. Given any objects $V, W$ of $\V$, we define
\[ \Map_\V(V,W) = W^V. \] 
In other words, a closed monoidal category is a category enriched in itself. 

\begin{remark}\label{mapi}
Observe that given a closed monoidal category $\V$ and an object $V$ in $\V$, the identity map $V \rightarrow V$ corresponds to a unique map $i \colon I \rightarrow V^V$ via the isomorphisms
\[ \Map_\V(V,V) \cong \Map_\V(I \otimes V, V) \cong \Map_\V(I, V^V). \]
\end{remark} 

Given two $\V$-categories, we are typically interested in functors between them that behave nicely with respect to their enrichments.  
\begin{definition} \cite[\S 1.2]{kelly}
Let $\C$ and $\D$ be $\V$-categories.  A \emph{$\V$-functor} $F \colon \C \rightarrow \D$ is a function assigning each object $A$ of $\C$ to an object $FA$ of $\D$, together with a morphism 
\[F_{AB} \colon \Map_\C(A,B) \rightarrow \Map_\D(FA,FB) \]
in $\V$ for each pair of objects $A$ and $B$ of $\C$, such that the collection of all such morphisms preserves composition and identity morphisms.
\end{definition} 

We can analogously define a $\V$-natural transformation between $\V$-functors.

\begin{definition}\label{V-natural transformations} \cite[\S 1.2]{kelly}
Let $\C$ and $\D$ be $\V$-categories and $F, G \colon \C \rightarrow \D$ be two $\V$-functors.  A $\V$-natural transformation  $\eta \colon F \Rightarrow G$ is a collection
\[ \{ \eta_A \colon I \rightarrow \Map_{\D}(FA,GA) \} \]
of morphisms in $\V$ where $A$ ranges over all objects in $\C$, such that the following diagram commutes for any objects $A$ and $B$ of $\C$:
\[ \begin{tikzpicture}[node distance=5cm, auto, x=.5mm, y=.8mm]

\node (A) {\small $\Map_{\C}(A,B)$};
\node (B) [node distance=4cm, right of=A] {\small $I \otimes \Map_{\C}(A,B)$};
\node (C) [node distance=7cm, right of=B] {\small $\Map_{\D}(FA,GA) \otimes \Map_{\D}(GA,GB)$};
\node (X) [node distance=2.5cm, below of=A] {\small $\Map_{\C}(A,B) \otimes I$};
\node (Y) [node distance=6cm, right of=X] {\small $\Map_{\D}(FA,FB) \otimes \Map_{\D}(FB,GB)$};
\node (Z) [right of=Y] {\small $\Map_{\mathcal{D}}(FA,GB)$.};

\draw[->] (A) to node {} (B);
\draw[->] (B) to node {\small $\eta_A \otimes G_{AB}$} (C);
\draw[->] (X) to node [swap] {\small $F_{AB} \otimes \eta_B$} (Y);
\draw[->] (Y) to node [swap] {\small $\circ$} (Z);
\draw[->] (A) to node [swap] {} (X);
\draw[->] (C) to node {\small $\circ$} (Z);

\end{tikzpicture} \]
\end{definition}

In particularly nice cases, a $\V$-enriched category interacts nicely with the monoidal category $\V$ via tensor and cotensor functors, generalizing features of the closed monoidal structure on $\V$.  We begin with the notion of a tensor functor, which associates to an object of $\D$ and an object of $\V$ another object of $\D$.

\begin{definition} \label{tensor def} \cite[3.7.2]{riehl2}
A $\V$-category $\D$ is \emph{tensored over} $\V$ if there exists a $\V$-functor 
\[ \begin{tikzpicture}[node distance=4cm, auto]
\node (A) {$\D \times \V$};
\node (B) [right of=A] {$\D$};
\node (X) [node distance=1cm, below of=A] {$(D,V)$};
\node (Y) [right of=X] {$D \otimes V$};
\node (X') [node distance=2cm, below of=X] {$(E,W)$};
\node (Y') [right of=X'] {$E \otimes W$};
\node (x) [node distance=1cm, below of=X] {};
\node (x') [node distance=0.5cm, right of=x] {};
\node (y) [node distance=1cm, below of=Y] {};
\node (y') [node distance=0.5cm, left of=y] {};
\draw[->] (A) to node {$- \otimes - $} (B);
\draw[->] (X) to node [swap] {$\Map_{\D}(D,E) \otimes W^V $} (X');
\draw[->] (Y) to node {$\Map_{\D}(D \otimes V, E \otimes W)$} (Y');
\draw[|->, dashed] (x') to node {} (y');
\end{tikzpicture} \]
together with a $\V$-natural isomorphism $\Map_{\D}(D \otimes V, E) \cong \Map_{\D}(D, E)^V$ in $\V$.
\end{definition}

Analogously, a cotensor functor also associates to an object of $\D$ and an object of $\V$ an object of $\D$, but in a manner more closely related to the internal hom object of $\V$.

\begin{definition} \label{cotensor def}  \cite[3.7.3]{riehl2}
A $\V$-category $\D$ is \emph{cotensored over} $\V$ if there exists a $\V$-functor 
\[  \begin{tikzpicture}[node distance=4cm, auto]

\node (A) {$\D \times \V^{\op}$};
\node (B) [right of=A] {$\D$};

\node (X) [node distance=1cm, below of=A] {$(D,V)$};
\node (Y) [right of=X] {$D^V$};
\node (X') [node distance=2cm, below of=X] {$(E,W)$};
\node (Y') [right of=X'] {$E^W$};
\node (x) [node distance=1cm, below of=X] {};
\node (x') [node distance=0.5cm, right of=x] {};
\node (y) [node distance=1cm, below of=Y] {};
\node (y') [node distance=0.5cm, left of=y] {};

\draw[->] (A) to node {$(-)^{(-)}$} (B);
\draw[->] (X) to node [swap] {$\Map_{\D}(D,E) \otimes V^W$} (X');
\draw[->] (Y) to node {$\Map_{\D}(D^V, E^W)$} (Y');
\draw[|->, dashed] (x') to node {} (y');
\end{tikzpicture} \]
together with a $\V$-natural isomorphism $\Map_{\D}(D, E^V) \cong \Map_{\D}(D, E)^V$.
\end{definition}

We can use the precise formulations of these definitions to show that tensors and cotensors are adjoint to one another, and to establish other important adjunctions relating a tensored and cotensored $\V$-category $\D$ to the enriching category $\V$.  Before presenting these adjunctions in Proposition \ref{tensor-cotensor adjunctions} below, we define the notion of $\V$-adjunction that we use there. 
\begin{definition}
A $\V$-\emph{adjunction} (or $\V$-\emph{enriched adjunction}) between $\V$-categories $\A$ and $\B$ is a pair $F \colon \A \rightarrow \B$ and $G \colon \B \rightarrow \A$ of $\V$-functors together with a $\V$-natural isomorphism 
\[ \Map_{\B} \left( F,- \right) \cong \Map_{\A} \left( -, G \right). \]
\end{definition}

\begin{prop} \label{tensor-cotensor adjunctions} 
Let $\D$ be a $\V$-category that is both tensored and cotensored over $\V$.
\begin{enumerate}
\item For any object $D$ of $\D$ there is a $\V$-adjunction 
\[ \begin{tikzpicture}[node distance=3cm, auto]
\node (A) {$\V$};
\node (B) [right of=A] {$\D$.};
\draw[->, bend left=35] (A) to node {$D \otimes -$} (B);
\draw[->, bend left=35] (B) to node {$\Map_{\D}(D,-)$} (A);
\node (W) [node distance=1.5cm, right of=A] {$\perp$};
\end{tikzpicture} \]

\item For any object $E$ of $\D$ there is a $\V$-adjunction 
\[ \begin{tikzpicture}[node distance=3cm, auto]
\node (A) {$\D$};
\node (B) [right of=A] {$\V^{\op}$.};
\draw[->, bend left=35] (A) to node {$\Map_{\D}(-,E)$} (B);
\draw[->, bend left=35] (B) to node {$E^{(-)}$} (A);
\node (W) [node distance=1.5cm, right of=A] {$\perp$};
\end{tikzpicture} \]

\item For any object $V$ of $\V$ there is a $\V$-adjunction 
\[ \begin{tikzpicture}[node distance=3cm, auto]
\node (A) {$\D$};
\node (B) [right of=A] {$\D$.};
\draw[->, bend left=35] (A) to node {$- \otimes V$} (B);
\draw[->, bend left=35] (B) to node {$(-)^V$} (A);
\node (W) [node distance=1.5cm, right of=A] {$\perp$};
\end{tikzpicture} \]
\end{enumerate}
\end{prop}

\begin{proof} 
The first two adjunctions follow  directly from the definitions of tensor and cotensor (Definitions \ref{tensor def} and \ref{cotensor def}, respectively). The third adjunction follows from  both definitions, in particular for each pair of objects $D$ and $E$ in $\D$ we have 
\[ \Map_{\D}(D \otimes V,E) \cong \Map_{\D}(D, E)^V \cong \Map_{\D}(D, E^V). \] 
\end{proof}

We conclude with a standard example of a $\V$-category that is both tensored and cotensored over $\V$.

\begin{example} 
The category $\Top$ is enriched, tensored and cotensored in the category of simplicial sets $\s$. For topological spaces $X$ and $Y$, we define $\Map_{\Top}(X,Y)$ to be the simplicial set for which
\[ \Map_{\Top}(X,Y)_n = \Hom_{\Top}(X \times \Delta^n, Y), \]
where $\Delta^n$ denotes the standard topological $n$-simplex.  For a simplicial set $K$, let $|K|$ denote its geometric realization. There is an $\s$-functor
\[ \begin{tikzpicture}[node distance=4cm, auto]
\node (A) {$\Top \times \s$};
\node (B) [right of=A] {$\Top$};
\node (X) [node distance=1cm, below of=A] {$(X,K)$};
\node (Y) [right of=X] {$X \times |K|$};
\node (X') [node distance=2cm, below of=X] {$(Y,L)$};
\node (Y') [right of=X'] {$Y \times |L|$};
\node (x) [node distance=1cm, below of=X] {};
\node (x') [node distance=0.5cm, right of=x] {};
\node (y) [node distance=1cm, below of=Y] {};
\node (y') [node distance=0.5cm, left of=y] {};
\draw[->] (A) to node {} (B);
\draw[->] (X) to node [swap] {$\Map_{Top}(X,Y) \times L^K $} (X');
\draw[->] (Y) to node {$\Map_{\Top}(X \times |K|, \ Y \times |L|)$} (Y');
\draw[|->, dashed] (x') to node {} (y');
\end{tikzpicture} \]
which is defined in degree $n$ by the function
\[ \Hom_{\Top} \left(X \times \Delta^n, Y\right) \times \Hom_{\s}\left(K \times \Delta[n], L\right) \longrightarrow \Hom_{\Top}\left(X \times |K| \times \Delta^n, Y \times |L|\right) \]
which, using the fact that geometric realizations preserve finite products, is induced by the canonical function
\[ \Hom_{\Top}\left(X \times \Delta^n, Y\right) \times \Hom_{\Top}\left(|K| \times \Delta^n, |L|\right) \longrightarrow \Hom_{\Top}\left(X \times |K| \times \Delta^n, Y \times |L|\right). \]
Furthermore, for any topological spaces $X$ and $Y$ and simplicial set $K$ there is an isomorphism 
\[ \Map_{\Top}(X \times |K|, Y) \cong \Map_{\Top}(X,Y)^K, \]
so $\Top$ is tensored over $\s$. It can be shown analogously that $\Top$ is also cotensored over $\s$.
\end{example}

\section{The enriched Yoneda Lemma} \label{sectionyoneda}

In this section, we consider representable functors in the setting of enriched categories, and establish an enriched version of the Yoneda Lemma and its dual. We begin with enriched representable functors.  

\begin{definition}\label{representable}
Let $\C$ be a $\V$-category. For each object $C$ of $\C$, the $\V$-functor \emph{represented by} $C$ is given by
\[ \begin{tikzpicture}[node distance=4cm, auto]

\node (A) {$\C$};
\node (B) [right of=A] {$\V$};

\node (X) [node distance=1cm, below of=A] {$A$};
\node (Y) [right of=X] {$\Map_{\C}(C,A)$};
\node (X') [node distance=2cm, below of=X] {$B$};
\node (Y') [right of=X'] {$\Map_{\C}(C,B)$};
\node (x) [node distance=1cm, below of=X] {};
\node (x') [node distance=0.5cm, right of=x] {};
\node (y) [node distance=1cm, below of=Y] {};
\node (y') [node distance=0.5cm, left of=y] {};

\draw[->] (A) to node {$R^C$} (B);
\draw[->] (X) to node [swap] {$\Map_{\C} (A,B)$} (X');
\draw[->] (Y) to node {$\Map_{\C} (C,B) ^{\Map_{\C} (C,A)}$} (Y');
\draw[|->, dashed] (x') to node {} (y');

\end{tikzpicture} \]
where the morphism 
\[ \Map_\C(A,B) \rightarrow \Map_\C(C,B)^{\Map_\C(C,A)}, \]
which we sometimes call the \emph{Yoneda embedding}, is the one adjoint to the composition morphism in $\C$
\[ \Map_\C(C,A) \otimes \Map_\C(A,B) \rightarrow \Map_\C(C,B) \]
via part (3) of Proposition \ref{tensor-cotensor adjunctions}.
\end{definition}

Before stating the Yoneda Lemma, we need the definition of an end, for which we  make some observations.  Suppose that $\C$ is a $\V$-category and that $X \colon \C^{\op} \times \C \rightarrow \V$ is a $\V$-bifunctor.  Given two objects $A$ and $B$ of $\C$, there is an induced morphism
\[ X(A,-) \colon \Map_\C(A,B) \rightarrow X(A,B)^{X(A,A)} \]
in $\V$.  Using the adjunction between the tensor and cotensor in $\V$, such a morphism corresponds to a morphism
\[ X(A,A) \otimes \Map_\C(A,B) \rightarrow X(A,B), \]
which in turn corresponds to a morphism
\[ X(A,A) \rightarrow X(A,B)^{\Map_\C(A,B)}. \]
One can analogously produce a morphism 
\[ X(B,B) \rightarrow X(A,B)^{\Map_\C(A,B)} \]
from the natural map
\[ X(-,B) \colon \Map_\C(A,B) \rightarrow X(A,B)^{X(B,B)}. \] 
\begin{definition}  \label{end}
Assume that $\V$ is complete, and let $X \colon \C^{\op} \times \C \rightarrow \V$ be a $\V$-bifunctor.   The \emph{end} of $X$ is the equalizer
\[ \int_A X(A,A) \rightarrow \prod_A X(A,A) \rightrightarrows \prod_{A,B} X(A,B)^{\Map_\C(A,B)} \]
in $\V$ whose parallel morphisms are induced by the ones described above. 
\end{definition}

We want to make a similar construction from a pair of $\V$-functors $F \colon \C \rightarrow \D$ and $X \colon \C \rightarrow \V$.  Assuming that $\D$ is complete and tensored and contensored over $\V$, we want to construct parallel morphisms
\[\prod_{A} FA^{XA} \rightrightarrows \prod_{A,B} {(FB^{XA})}^{\Map_{\C}(A,B)} \]
in $\D$.  We can take the first morphism on each factor to be given by the map 
\[ FA^{XA} \rightarrow {(FB^{XA})}^{\Map_{\C}(A,B)} \]
which corresponds under adjunction to the composite 
\[ \Map_{\C}(A,B) \otimes XA \otimes FA^{XA} \rightarrow \Map_{\D}(FA,FB) \otimes FA \rightarrow FB \]
where the first arrow uses  the counit of adjunction (3) of Proposition \ref{tensor-cotensor adjunctions} and the $\V$-functor structure of $F$, and the second arrow uses the counit of adjunction (1) of Proposition \ref{tensor-cotensor adjunctions}. We can obtain the second parallel morphism
\[ FB^{XB} \rightarrow {(FB^{XA})}^{\Map_{\C}(A,B)} \]
similarly, namely as the one corresponding via adjunction to the composite
\[ \Map_{\C}(A,B) \otimes XA \otimes FB^{XB} \rightarrow XB^{XA} \otimes XA \otimes FB^{XB} \rightarrow XB \otimes FB^{XB} \rightarrow FB. \]

\begin{definition} \label{Fend}
Assume that $\D$ is complete and tensored and cotensored over $\V$, and let $F \colon \C \rightarrow \D$ and $X \colon \C \rightarrow \V $ be $\V$-functors.  The \emph{evaluated cotensor} of the pair $(F,X)$ is the equalizer  
\[ F^X := \int_{A} FA^{XA} \rightarrow \prod_{A} FA^{XA} \rightrightarrows \prod_{A,B} {(FB^{XA})}^{\Map_{\C}(A,B)} \]
in $\D$ whose parallel morphisms are the ones described above.  
\end{definition}

\begin{remarks} \label{cotensor} \label{brremark}
Let us make a few comments about the notation and terminology we have chosen here.
\begin{enumroman}
\item Observe that, as suggested by the notation 
\[F^X = \int_{A} FA^{XA}, \]
the evaluated cotensor can be thought of as a generalization of an ordinary end.  In the special case when $\D = \V$, the assignment $F^{X}(A,B) = FB^{XA}$ defines a $\V$-bifunctor $\C^{\op} \times \C \rightarrow \V$ whose end is precisely $F^X$. 

\item However, we have chosen to use the word ``cotensor" to describe this construction.  We justify this terminology in the next section by proving that it behaves much like a cotensor.  In particular, in Propositions \ref{FunAdjunctionX} and \ref{FunAdjunctionF}, we show that the assignment 
\[ (F,X)\mapsto F^X \]
defines a $\V$-bifunctor $\Fun(\C,\V)^{\op} \times \Fun(\C,\D) \rightarrow \D$.  

\item In \cite[2.5]{br}, Biedermann and R\"ondigs use the notation $\mathbf{hom}(X,F)$ for the evaluated cotensor $F^X$.  
\end{enumroman}
\end{remarks}
  
Setting $X=R^C$ in the evaluated cotensor yields the following enriched version of the Yoneda lemma. 

\begin{lemma} \label{Yoneda} 
Let $\C$ and $\D$ be $\V$-categories with $\D$ complete and both tensored and cotensored over $\V$, and let $F \colon \C \rightarrow \D$ be a $\V$-functor.  For each object $C$ of $\C$ there is a natural isomorphism 
\[ FC \cong F^{R^C} = \int_{A} FA^{R^C(A)} = \int_{A} FA^{\Map_{\C}(C,A)}. \]
\end{lemma}

\begin{proof}
The proof follows an argument similar to the one outlined for the case where $\D=\V$ in \cite[\S 2.4]{kelly}.  The key step is noting that for each object $A$ in $\C$ there is a morphism
\[ FC \rightarrow FA^{\Map_{\C}(C,A)} \]
corresponding under adjunction to 
\[ F_{CA}:\Map_{\C}(C,A) \rightarrow \Map_{\D}(FC,FA) \]
and then proving that the collection of all such morphisms exhibits $FC$ as the equalizer of the parallel pair
\[ \prod_{A} FA^{\Map_{\C}(C,A)} \rightrightarrows \prod_{A,B} {(FB^{\Map_{\C}(C,A)})}^{\Map_{\C}(A,B)}. \] 
The result then follows by an application of Definition \ref{Fend}.
\end{proof}

For future use, we include the following result showing that cotensors commute with evaluated contensors.

\begin{proposition} \label{l:homcotensor} 
Supppose that $\D$ is complete and tensored and cotensored over $\V$.  For any $\V$-functors $F \colon \C \rightarrow \D$ and $X \colon \C \rightarrow \V$ and any object $V$ of $\V$, there are natural isomorphisms in $\V$
\[ \int_\C FC^{XC \otimes V} \cong \int_\C \left(FC^{XC}\right)^V \cong \left (\int_\C FC^{XC} \right)^V. \]
\end{proposition}

\begin{proof}
The first isomorphism follows from the adjunction in part (3) of Proposition \ref{tensor-cotensor adjunctions}.  That same proposition also shows that the cotensor defines a right adjoint, and since ends are limits and right adjoints preserve limits, the second isomorphism follows. 
\end{proof}

To finish this section, we give some dual constructions and results, leading to the co-Yoneda Lemma.  

First,  we can consider the \emph{co-Yoneda embedding}, which is the assignment
\[ \Map_\C(B,A) \rightarrow \Map_\C (B,C)^{\Map_\C(A,C)}, \]
that is adjoint to the composition morphism
\[ \Map_\C(B,A) \otimes \Map_\C(A,C) \rightarrow \Map_\C(B,C). \]
It gives rise to the $\V$-functor
\[ \begin{tikzpicture}[node distance=4cm, auto]

\node (A) {$\C^{\op}$};
\node (B) [right of=A] {$\V$};

\node (X) [node distance=1cm, below of=A] {$A$};
\node (Y) [right of=X] {$\Map_{\C}(A,C)$};
\node (X') [node distance=2cm, below of=X] {$B$};
\node (Y') [right of=X'] {$\Map_{\C}(B,C)$.};
\node (x) [node distance=1cm, below of=X] {};
\node (x') [node distance=0.5cm, right of=x] {};
\node (y) [node distance=1cm, below of=Y] {};
\node (y') [node distance=0.5cm, left of=y] {};

\draw[->] (A) to node {$R_C$} (B);
\draw[->] (X) to node [swap] {$\Map_{\C} (B,A)$} (X');
\draw[->] (Y) to node {$\Map_{\C} (B,C) ^{\Map_{\C} (A,C)}$} (Y');
\draw[|->, dashed] (x') to node {} (y');

\end{tikzpicture} \]

We can also define the notion of a coend of a $\V$-bifunctor $\C^{\op} \times \C \rightarrow \V$.

\begin{definition}
Assume that $\V$ is cocomplete. The \emph{coend} of a $\V$-bifunctor $X \colon \C^{\op} \times \C \rightarrow \V$ is the coequalizer
\[ \coprod_{A,B} X(A,B) \otimes \Map_\C(B,A) \rightrightarrows \coprod_A X(A,A) \rightarrow \int^A X(A,A) \]
in $\V$ whose morphisms can be obtained similarly to the ones used in the definition of end. 
\end{definition}

We can make a similar definition for a pair of $\V$-functors $F \colon \C \rightarrow \D$ and $X \colon \C \rightarrow \V^{\op}$, but we do not distinguish it with another name and simply refer to it as a coend.  In contrast to the discussion in Remark \ref{cotensor}, while this construction is formally dual to that of the evaluated cotensor, it does not seem to have properties that would justify calling it an ``evaluated tensor". In particular, the assignment $\Fun(\C, \D) \times \Fun(\C, \V^{\op}) \rightarrow \D$ constructed in Definition \ref{Fcoend} does not appear to be $\V$-functorial.

\begin{definition}  \label{Fcoend}
Assume that $\D$ is cocomplete and tensored and cotensored over $\V$, and let $F \colon \C \rightarrow \D$  and $X \colon \C \rightarrow \V^{\op}$ be $\V$-functors. The \emph{coend}  of $(F, X)$ is the coequalizer in $\D$ of the parallel morphisms
\[\coprod_{A,B} FB \otimes XA \otimes \Map_{\C}(B,A) \rightrightarrows \coprod_A FA \otimes XA \rightarrow \int^{A} FA \otimes XA \]
analogous to those in Definition \ref{Fend}.
\end{definition}

The following result, which we call the \emph{co-Yoneda Lemma}, can be proved analogously to the Yoneda Lemma.

\begin{lemma} \label{Coyoneda}
Let $\C$ and $\D$ be $\V$-categories with $\D$ cocomplete and both tensored and cotensored in $\V$, and let $F \colon \C \rightarrow \D$ be a $\V$-functor. For each object $C$ in $\C$, there is a natural isomorphism
\[ FC \cong \int^{A} FA \otimes R_C(A) = \int^{A} FA \otimes \Map_{\C}(A,C). \]
\end{lemma}

\section{Enriched functor categories} \label{sectionfunctors}

Because our motivation arises from functor calculus, we are primarily interested in categories whose objects are themselves given by functors between categories.  Thus, let $\V$ be a complete closed monoidal category, 
and let $\C$ and $\D$ be $\V$-categories with $\C$ small. 
We consider the category $\Fun(\C,\D)$ whose objects are $\V$-functors $\C \rightarrow \D$ and whose morphisms are $\V$-natural transformations.  

We emphasize that, although the notation $\Fun(\C,\D)$ is often used to denote the category of all functors $\C \rightarrow \D$ with no additional structure, in this paper we always use it to denote the category of $\V$-functors and $\V$-natural transformations.

\begin{prop}\cite[Section 5]{day2} 
The category $\Fun(\C,\D)$ is a $\V$-category.  For $\V$-functors $F, G \colon \C \rightarrow \D$, the mapping object is given by
\[ \Map_{\Fun(\C,\D)}(F,G): = \int_A \Map_{\D}(FA,GA) \]
where $A$ ranges over all objects of $\C$.  
\end{prop}

\begin{proof}
To see the mapping object indeed defines the structure of a $\V$-category, first observe that for any $\V$-functors $F,G,H \colon \C \rightarrow \D$, we can define composition morphisms
\[ \int_A \Map_\D(FA,GA) \otimes \int_A \Map_D(GA,HA) \rightarrow \int_A \Map_\D(FA,HA) \]
using the composition morphisms
\[ \Map_\D(FA, GA) \otimes \Map_\D(GA,HA) \rightarrow \Map_\D(FA, HA) \]
in $\D$ for any object $A$ of $\C$. The identity morphism at a $\V$-functor $F$ is the morphism 
\[ I \rightarrow \int_{A} \Map_{\D}(FA,FA) \]
induced by the collection of identity morphisms 
\[i \colon I \rightarrow \Map_{\D}(FA,FA) \]
in $\D$ for each object $A$ of $\C$.

To complete the proof, we need to show that the ordinary category underlying this $\V$-category is the category of $\V$-functors $\Fun(\C,\D)$; see Definition \ref{underlyingcat}.  Observe that a morphism
\[ I \rightarrow \int_{A} \Map_{\D}(FA,GA) \]
in $\V$ is equivalently a morphism  
\[ I \rightarrow \prod_{A} \Map_{\D}(FA,GA) \]
such that the morphisms obtained from composing with the parallel pair
\[ \prod_A \Map_\D(FA, GA) \rightrightarrows \prod_{A,B} \Map_\D(FA, GB)^{\Map_\C(A,B)} \]  
are equal. By unpacking the definitions of this parallel pair, we see that such a map is given by a collection of morphisms 
\[ \alpha_A \colon I \rightarrow \Map_{\D}(FA,GA), \]
where $A$ ranges over all objects of $\C$, such that the following diagram commutes for all objects $A$ and $B$ of $\C$: 
\[ \begin{tikzpicture}[node distance=5cm, auto]

\node (A) {\small $I \otimes \Map_{\C}(A,B)$};
\node (B) [node distance=5cm, right of=A] {\small $\Map_{\D}(FA,GA) \otimes \Map_{\C}(A,B)$};
\node (C) [node distance=6.5cm, right of=B] {\small $\Map_{\D}(FA,GA) \otimes \Map_{\D}(GA,GB)$}; 
\node (A') [node distance=1cm, below of=A] {\small $\Map_{\C}(A,B) \otimes I$};
\node (X) [node distance=2.5cm, below of=A] {\small $\Map_{\C}(A,B) \otimes \Map_{\D}(FB,GB)$};
\node (Y) [node distance=6.5cm, right of=X] {\small $\Map_{\D}(FA,GB) \otimes \Map_{\D}(FB,GB)$};
\node (Z) [right of=Y] {\small $\Map_{\D}(FA,GB)$.};

\draw[->] (A) to node {\small $\alpha_A \otimes 1$} (B);
\draw[->] (B) to node {\small $1 \otimes G_{AB}$} (C);
\draw[->] (X) to node [swap] {\small $F_{AB} \otimes 1$} (Y);
\draw[->] (Y) to node [swap] {\small $\circ$} (Z);
\draw[->] (A) to node [swap] {} (A');
\draw[->] (A') to node [swap] {\small $1 \otimes \alpha_B$} (X);
\draw[->] (C) to node {\small $\circ$} (Z);

\end{tikzpicture} \]
Comparing with Definition \ref{V-natural transformations}, we that there is a natural isomorphism of sets 
\[ \Hom_{\V}\left(I, \int_{A} \Map_{\D}(FA,GA) \right) \cong \Hom_{\Fun(\C, \D)}(F,G). \] 
Thus $\Fun(\C, \D)$ is a $\V$-category with the indicated mapping objects.
\end{proof}

We now prove that, if $\D$ is either tensored or cotensored over $\V$, then that same structure is transferred to $\Fun(\C,\D)$.

\begin{prop} \label{Fun is tensored} 
If $\D$ is tensored over $\V$, then $\Fun(\C, \D)$ is also tensored over $\V$, with tensor structure defined by $(F \otimes V)(A) = FA \otimes V$ for each object $A$ in $\C$.
\end{prop}

\begin{proof} 
More specifically, we define the $\V$-functor
\[ \begin{tikzpicture}[node distance=4cm, auto]

\node (A) {$\Fun(\C, \D) \times \V$};
\node (B) [right of=A] {$\Fun(\C, \D)$};

\node (X) [node distance=1cm, below of=A] {$(F,V)$};
\node (Y) [right of=X] {$F \otimes V$};
\node (X') [node distance=2cm, below of=X] {$(G,W)$};
\node (Y') [right of=X'] {$G \otimes W$};
\node (x) [node distance=1cm, below of=X] {};
\node (x') [node distance=0.5cm, right of=x] {};
\node (y) [node distance=1cm, below of=Y] {};
\node (y') [node distance=0.5cm, left of=y] {};

\draw[->] (A) to node {$- \otimes -$} (B);
\draw[->] (X) to node [swap] {$\Map_{\Fun(\C, \D)}(F, G) \otimes W^V$} (X');
\draw[->] (Y) to node {$\Map_{\Fun(\C, \D)} \left(F \otimes V, G \otimes W \right)$} (Y');
\draw[|->, dashed] (x') to node {} (y');
\end{tikzpicture} \]
where $(F \otimes V)(A) = FA \otimes V$ for each object $A$ in $\C$.

To show that $F \otimes V$ is a $\V$-functor $\C \rightarrow \D$ we have to define, for any objects $A$ and $B$ of $\C$, compatible morphisms
\[ \Map_\C(A,B) \rightarrow \Map_\D(FA \otimes V, FB \otimes V) \]
in $\V$. Such morphisms are given by composites
\[ \xymatrixcolsep{4pc}\xymatrix{\Map_\C(A,B) \cong \Map_\C(A,B) \otimes I \ar[r]^-{F_{AB} \otimes i} \ar[dr] & \Map_\D(FA, FB) \otimes V^V \ar[d] \\ 
& \Map_\D(FA \otimes V, FB \otimes V),} \]
where the map $i$ was described in Remark \ref{mapi} and the downward morphism comes from applying Definition \ref{tensor def} in $\D$.

Given an object $A$ of $\C$, we can apply Definition \ref{tensor def} to $D = FA$ and $E=GA$ to obtain a morphism in $\V$
\[ \Map_\D(FA, GA) \otimes W^V \rightarrow \Map_\D(FA \otimes V, GA \otimes W). \]
As $A$ varies, we obtain an assignment
\[ \Map_{\Fun(\C, \D)}(F, G) \otimes W^V \rightarrow \Map_{\Fun(\C, \D)} \left(F \otimes V, G \otimes W \right) \]
so $- \otimes -:\Fun(\C,\D) \otimes \V \rightarrow \Fun(\C,\D)$ is itself a $\V$-functor.

Finally, the isomorphisms 
\[ \Map_{\D}(FA \otimes V, GA) \cong \Map_{\D}(FA, GA)^V \]
in $\D$ induce the necessary corresponding isomorphisms
\[ \Map_{\Fun(\C, \D)}(F \otimes V, G) \cong \Map_{\Fun(\C, \D)}(F, G)^V \]
in $\Fun(\C,\D)$.
\end{proof}

We now prove the analogous result when $\D$ is cotensored in $\V$.

\begin{prop} \label{Fun is cotensored} 
If $\D$ is cotensored over $\V$, then so is $\Fun(\C,\D)$, via $(F^V)(A) = FA^V$ for each object $A$ in $\C$.
\end{prop}

\begin{proof} 
We claim that the definition of the cotensor we have given for objects defines a bifunctor
\[ \Fun(\C,\D) \times \V^{\op} \rightarrow \Fun(\C,\D). \]
Setting $D = FA$ and $E=GA$ for each object $A$ of $\C$ in Definition \ref{cotensor def}, we get a morphism
\[ \Map_\D(FA, GA) \otimes V^W \rightarrow \Map_\D(FA^V, GA^W) \]
in $\V$. As we take ends over all objects $A$
we obtain the following assignment.
\[ \begin{tikzpicture}[node distance=4cm, auto]
\node (A) {$\Fun(\C, \D) \times \V^{\op}$};
\node (B) [right of=A] {$\Fun(\C, \D)$};

\node (X) [node distance=1cm, below of=A] {$(F,V)$};
\node (Y) [right of=X] {$F^V$};
\node (X') [node distance=2cm, below of=X] {$(G,W)$};
\node (Y') [right of=X'] {$G^W$};
\node (x) [node distance=1cm, below of=X] {};
\node (x') [node distance=0.5cm, right of=x] {};
\node (y) [node distance=1cm, below of=Y] {};
\node (y') [node distance=0.5cm, left of=y] {};

\draw[->] (A) to node {$(-)^{(-)}$} (B);
\draw[->] (X) to node [swap] {$\Map_{\Fun(\C, \D)}(F, G) \otimes V^W$} (X');
\draw[->] (Y) to node {$\Map_{\Fun(\C, \D)} \left(F^V, G^W \right)$} (Y');
\draw[|->, dashed] (x') to node {} (y');

\end{tikzpicture} \]
To see that $F^V$ is a $\V$-functor $\C \rightarrow \D$ note that for objects $A$ and $B$ of $\C$, the map
\[ \Map_\C(A,B) \rightarrow \Map_\D\left(FA^V, FB^V\right) \]
is given by the composite 
\[ \xymatrix{\Map_\C(A,B) \cong \Map_\C(A,B) \otimes I \ar[rr]^-{F_{AB} \otimes i} \ar[drr] && \Map_\D(FA, FB) \otimes V^V \ar[d] \\
&& \Map_\D(FA^V, FB^V),} \]
where the downward morphism is another instance of the one in Definition \ref{cotensor def}.  

Finally, we obtain an isomorphism
\[ \Map_{\Fun(\C, \D)}(F, G^V) \cong \Map_{\Fun(\C, \D)}(F, G)^V \]
from the isomorphisms
\[ \Map_{\D}(FA, GA^V) \cong \Map_{\D}(FA, GA)^V \]
in $\V$ for every object $A$ of $\C$.
\end{proof}

One might also wonder if $\Fun(\C,\D)$ is enriched, tensored, and cotensored in $\Fun(\C,\V)$, and in particular, if the evaluated cotensor of Definition \ref{Fend} serves as a cotensor in this context. Having such an enrichment would require $\Fun(\C,\V)$ to be a closed monoidal category.  While Day \cite{day} shows that $\Fun(\C,\V)$ has this additional structure for certain $\V$-categories $\C$ satisfying some further restrictive conditions, it is not true in general.  

For our work, we do not need this further enrichment, and we prefer to avoid imposing extra conditions on $\C$ to obtain it.  We do, however, want to make use of adjunctions similar to those of parts (2) and (3) of Proposition \ref{tensor-cotensor adjunctions} for the evaluated cotensor. We thus establish the existence of such adjunctions via the next two propositions.  While we do not need it for our purposes, for the curious reader we also include a third result establishing the analogue of part (1) of Proposition \ref{tensor-cotensor adjunctions}.

The proofs of these three results are very similar in structure, so we only include a full argument for the first of them.  For the remaining two we simply state the result and leave the details to the reader. 

We begin with the adjunction analogous to part (3) of Proposition \ref{tensor-cotensor adjunctions}, in which the evaluated cotensor is used to define the right adjoint.  

\begin{proposition}\label{FunAdjunctionX}
For each $\V$-functor $X \colon \C \rightarrow \V$ there is a $\V$-adjunction
\[ \begin{tikzpicture}[node distance=3.6cm, auto]
\node (A) {$\D$};
\node (B) [right of=A] {$\Fun(\C, \D)$};
\draw[->, bend left=35] (A) to node {$X \otimes -$} (B);
\draw[->, bend left=35] (B) to node {$(-)^X$} (A);
\node (W) [node distance=1.8cm, right of=A] {$\perp$};
\end{tikzpicture} \] 
where $(X \otimes D)(C) := XC \otimes D$ for each object $C$ of $\C$. 
\end{proposition}

\begin{proof}
First we show that $X \otimes -$ is well defined on objects, i.e., that $X \otimes D$ is a $\V$-functor $\C \rightarrow \D$ for each object $D$ of $\D$. In order to do so, we need to construct a morphism 
\[ \Map_{\C}(A,B) \rightarrow \Map_{\D}(XA \otimes D, XB \otimes D) \]
in $\V$ for each pair of objects $A$ and $B$ in $\C$. Such a morphism is given by the composite
\[ \Map_{\C}(A,B) \rightarrow XB^{XA} \rightarrow  \Map_{\D}(D, XB \otimes D)^{XA} \cong \Map_{\D}(XA \otimes D, XB \otimes D) \]
where the second morphism is induced by the unit of the adjunction in part (1) of Proposition \ref{tensor-cotensor adjunctions}, applied to the component at $XB$. 

To see that $X \otimes -$ defines a $\V$-functor $\D \rightarrow \Fun(\C,\D)$, note that for any pair of objects $D$ and $E$ in $\D$ and object $C$ in $\C$ we have a morphism 
\[ \Map_{\D}(D,E) \rightarrow \Map_{\D} \left(D, (XC \otimes E)^{XC} \right) \cong \Map_{\D}(XC \otimes D, XC \otimes E), \] 
in $\V$ given by the unit of the adjunction in part (3) of Proposition \ref{tensor-cotensor adjunctions}. The collection of all such morphisms induces a map 
\[ \Map_{\D}(D,E) \rightarrow \Map_{\Fun(\C, \D)}(X \otimes D, X \otimes E). \]

Next, we show that $(-)^X$ defines a $\V$-functor $\Fun(\C,\D) \rightarrow \D$, for which we must construct a morphism
\[ \Map_{\Fun(\C, \D)}(F,G) \rightarrow \Map_{\D}(F^X, G^X) = \Map_{\D}\left(\int_{\C}FC^{XC}, \int_{\C}GC^{XC}\right) \]
in $\V$ for each pair of $\V$-functors $F, G \colon \C \rightarrow \D$. Observe that for each object $C$ in $\C$ there is a composite
\[ \Map_{\D}(FC,GC) \rightarrow \Map_{\D}(XC \otimes FC^{XC}, GC) \cong \Map_{\D}(FC^{XC}, GC^{XC}) \rightarrow \Map_{\D}\left(\int_C FC^{XC}, GC^{XC}\right) \] 
where the first map given by the counit of the adjunction in part (3) of Proposition \ref{tensor-cotensor adjunctions} and the second map is induced by the projection $\displaystyle\int_C FC^{XC} \rightarrow FC^{XC}$. The collection of these morphisms induces
\[ \Map_{\Fun(\C, \D)}(F,G) = \int_C \Map_{\D}(FC,GC) \rightarrow \int_C \Map_{\D}\left(\int_C FC^{XC}, GC^{XC}\right), \]
and since $\Map_{\D}(D,-)$ is a right adjoint for all $D$ and therefore preserves limits, we have
\[ \int_C \Map_{\D}\left(\int_C FC^{XC}, GC^{XC}\right) \cong \Map_{\D}\left(\int_C FC^{XC}, \int_C GC^{XC} \right). \]

Finally, we show that the $\V$-functors in the Proposition form an adjoint pair by constructing a unit and a counit satisfying the triangle identities. The unit of the adjunction at an object $D$ of $\D$ is the morphism
\[ D \rightarrow \int_{\C}(XA \otimes D)^{XA} = (X \otimes D)^X\]
induced by the collection of morphisms
\[ D \rightarrow (XA \otimes D)^{XA} \]
given by unit of the adjunction of part (3) of Proposition \ref{tensor-cotensor adjunctions}.

The counit of the adjunction at a $\V$-functor $F \colon \C \rightarrow \D$ is the natural transformation 
\[ X \otimes \int_{\C}FC^{XC} \rightarrow F \]
induced by the composites
\[ XA \otimes \int_{\C}FC^{XC} \rightarrow XA \otimes \prod_{\C} FC^{XC} \rightarrow XA \otimes FA^{XA}  \rightarrow FA \]
for each object $A$ of $\C$, where the rightmost arrow is the counit of the adjunction in part (3) of Proposition \ref{tensor-cotensor adjunctions}.  It is now straightforward to show that the triangle identities are satisfied. 
\end{proof}

The next result provides a similar analogue to part (2) of Proposition \ref{tensor-cotensor adjunctions}. 

\begin{proposition} \label{FunAdjunctionF}
For each $\V$-functor $F \colon \C \rightarrow \D$ there is a $\V$-adjunction
\[ \begin{tikzpicture}[node distance=3.6cm, auto]
\node (A) {$\D$};
\node (B) [right of=A] {$\Fun(\C, \V)^{\op}$,};
\draw[->, bend left=35] (A) to node {$\Map_\D(-,F)$} (B);
\draw[->, bend left=35] (B) to node {$F^{(-)}$} (A);
\node (W) [node distance=1.8cm, right of=A] {$\perp$};
\end{tikzpicture} \] 
where $\Map_{\D}(D,F)(A) = \Map_{\D}(D,FA)$.
\end{proposition}

To end this section, we provide an adjunction in which tensoring with an object in $\V$ in part (1) of Proposition \ref{tensor-cotensor adjunctions} is replaced by tensoring with an object in $\Fun(\C,\V).$ 

Recall the definitions of $(X \otimes D)$ and $\Map_{\D}(D,F)$ from Propositions \ref{FunAdjunctionX} and \ref{FunAdjunctionF}, respectively.  

\begin{proposition}\label{FunAdjunctionD}
For each object $D$ of $\D$ there is a $\V$-adjunction
\[ \begin{tikzpicture}[node distance=3.6cm, auto]
\node (A) {$\Fun(\C, \V)$};
\node (B) [right of=A] {$\Fun(\C, \D)$.};
\draw[->, bend left=35] (A) to node {$- \otimes D$} (B);
\draw[->, bend left=35] (B) to node {$\Map_\D(D,-)$} (A);
\node (W) [node distance=1.8cm, right of=A] {$\perp$};
\end{tikzpicture} \] 
\end{proposition}

\section{Enriched model categories} \label{sectionmodel}

We now turn to $\V$-categories that are equipped with the additional structure of a model category.  Recall that a model category is a category $\C$ with three distinguished classes of morphisms called \emph{cofibrations}, \emph{fibrations}, and \emph{weak equivalences}, satisfying five axioms \cite[7.1.3]{hirschhorn}. An \emph{acyclic fibration} is a fibration which is also a weak equivalence, and similarly an \emph{acyclic cofibration} is both a cofibration and a weak equivalence. 

For example, the category $\s$ of simplicial sets has a model structure with weak equivalences given by the maps whose geometric realizations are weak homotopy equivalences, fibrations the Kan fibrations, and cofibrations the monomorphisms \cite{quillen}. 

An object $A$ in a model category $\C$ is \emph{cofibrant} if the unique morphism from the initial object in $\C$ to $A$ is a cofibration. Dually, an object $X$ is \emph{fibrant} if the unique morphism from $X$ to the final object in $\C$ is a fibration.  Given a model structure on a category $\C$ we often have a good understanding of the fibrant or the cofibrant objects, and sometimes both. The model structure on a category $\C$ is usually set up so that the cofibrant or the fibrant objects are the primary objects of interest.  For instance, in the standard model structure on the category of simplicial sets $\s$ all objects are cofibrant and Kan complexes are the fibrant objects. 

In nice examples, a model category has the additional structure of being \emph{cofibrantly generated} \cite[\S 11.1]{hirschhorn}, in that there are sets $I$ and $J$ of maps such that a map is an acyclic fibration if and only if it has the right lifting property with respect to the maps in $I$, and a fibration if and only if it has the right lifting property with respect to $J$, with both sets satisfying the small object argument \cite[10.5.16]{hirschhorn}.  In the example $\s$ the set $I$ can be taken to be the set of boundary inclusions, and the set $J$ can be taken to be the set of horn inclusions.  

As before, let $(\V,\otimes,I)$  be a closed symmetric monoidal category but now we require it to also have the structure of a model category.  In particular, we want these two structures on $\V$ to be compatible, in the following sense.

\begin{definition} 
A \emph{symmetric monoidal model category} $\V$ is a symmetric monoidal category $(\V, \otimes, I)$ together with a model category structure on $\V$ satisfying the following conditions. 
\begin{itemize}
    \item If $i \colon K \rightarrow L$ and $j \colon V \rightarrow W$ are cofibrations in $\V$, then 
    \[ K\otimes W \cup_{K \otimes V} L \otimes V \rightarrow L \otimes W \] 
    is a cofibration and a weak equivalence if  $i$ or $j$ is. 
    \item If $QI$ is a cofibrant replacement of $I$, then for any object $V$ of $\V$ the induced map  $QI \otimes V \rightarrow I \otimes V$ is a weak equivalence.
\end{itemize}
\end{definition} 

\begin{example}
We have seen in Example \ref{ssetissymmon} that the category $\s$ of simplicial sets is a closed symmetric monoidal category with the monoidal structure given by the usual product.  This structure is compatible with the model structure described above \cite[4.2.8]{hovey}.
\end{example}

Next, we consider what it means for the model structure on $\D$ and the $\V$-enrichment of $\D$ to be compatible with one another. We label the axioms below according to the usual convention when $\V$ is the category of simplicial sets, i.e., in the definition of a simplicial model category.

\begin{definition} 
Let $\V$ be a closed symmetric monoidal model category.  A $\V$-\emph{model category} is a $\V$-category $\D$ that is equipped with a model structure on its underlying category such that
\begin{itemize}
\item[(MC6)] the category $\D$ is tensored and cotensored over $\V$; and

\item[(MC7)] for any fibration $p \colon D \rightarrow E$ in $\D$ and cofibration $i \colon V \rightarrow W$ in $\V$, the pullback corner map 
\[ \begin{tikzpicture}[node distance=3cm, auto]

\node (A) {$D^V \times_{E^V} E^W$};
\node (B) [right of=A] {$D^V$};
\node (C) [node distance=2.2cm, below of=A] {$E^W$};
\node (D) [right of=C] {$E^V$};
\node (E') [node distance=1.2cm, above of=A, left of=A] {};
\node (E) [node distance=0.4cm, left of=E'] {$D^W$};

\draw[->] (A) to node {} (B);
\draw[->] (C) to node [swap] {} (D);
\draw[->] (A) to node [swap] {} (C);
\draw[->] (B) to node {} (D);
\draw[->, bend right] (E) to node {} (C);
\draw[->, bend left=18] (E) to node {} (B);
\draw[->, dashed] (E) to node [swap]  {} (A);

\end{tikzpicture} \]
is a fibration, and is a weak equivalence if either $i$ or $p$ is. 
\end{itemize} 
\end{definition}

\begin{remark} 
Assuming that $\D$ is a model category and (MC6) holds, (MC7) is equivalent to the condition
\begin{enumerate}[(MC7')]
\item for any cofibration $i \colon D \rightarrow E$ in $\D$ and cofibration $j \colon V \rightarrow W$ in $\V$, the pushout corner map 
\[ \begin{tikzpicture}[node distance=3.5cm, auto]

\node (A) {$D \otimes V$};
\node (B) [right of=A] {$D \otimes W$};
\node (C) [node distance=2.2cm, below of=A] {$E \otimes V$};
\node (D) [right of=C] {$(D \otimes W) \amalg_{D \otimes V} (E \otimes V)$};
\node (E') [node distance=1.4cm, below of=D, right of=D] {};
\node (E) [node distance=1cm, right of=E'] {$E \otimes W$};

\draw[->] (A) to node {} (B);
\draw[->] (C) to node [swap] {} (D);
\draw[->] (A) to node [swap] {} (C);
\draw[->] (B) to node {} (D);
\draw[->, bend right=18] (C) to node {} (E);
\draw[->, bend left=35] (B) to node {} (E);
\draw[->, dashed] (D) to node [swap]  {} (E);

\end{tikzpicture} \]
is a cofibration, and is a weak equivalence if either $i$ or $j$ is. 
\end{enumerate}

This statement is proved in \cite[9.3.7]{hirschhorn} when $\V$ is the category of simplicial sets. The proof is analogous for more general $\V$. 
\end{remark}

\begin{convention} 
From now on we assume that $\C$ is a small $\V$-category and that $\D$ is a $\V$-model category. 
\end{convention}
      
The following result tells us that given any cofibrantly generated $\V$-model category $\D$, there exists a model structure on $\Fun(\C,\D)$ induced by the model structure on $\D$. We omit some of the technical assumptions, since we do not need them here, but refer the reader to \cite[4.32]{gm} for the precise statement.  See also \cite{dro}, \cite{ds}, and \cite{schwedeshipley} for the special case when $\D=\V$, using slightly different model category assumptions.

\begin{theorem} \label{projmodel} 
Let $\C$ be a small $\V$-category and $\D$ be a cofibrantly generated $\V$-model category satisfying some mild conditions on the sets of generating cofibrations and acyclic cofibrations.  Then the category $\Fun(\C,\D)$ has a model structure in which a morphism $F \rightarrow G$ in $\Fun(\C,\D)$ is:
\begin{itemize} 
\item a weak equivalence if $FA \rightarrow GA$ is a weak equivalence in $\D$ for all objects $A$ of $\C$;

\item a fibration if $FA \rightarrow GA$ is a fibration in $\D$ for all objects $A$ of $\C$; and

\item a cofibration if it has the left lifting property with respect to all acyclic fibrations in $\Fun(\C,\D)$. 
\end{itemize}
This model structure on $\Fun(\C,\D)$ is called the \emph{projective model structure}. 
\end{theorem}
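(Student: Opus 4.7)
The strategy is to apply a recognition theorem for cofibrantly generated model categories, such as Kan's transfer criterion \cite[11.3.2]{hirschhorn}, by constructing the generating (acyclic) cofibrations on $\Fun(\C,\D)$ from those of $\D$ via left adjoints to the evaluation functors. For each object $A$ of $\C$, the evaluation functor $\mathrm{ev}_A \colon \Fun(\C,\D) \rightarrow \D$ sending $F$ to $FA$ preserves limits and colimits, which are formed objectwise. By the co-Yoneda Lemma (Lemma \ref{Coyoneda}), it admits a left adjoint $F_A \colon \D \rightarrow \Fun(\C,\D)$ sending $D$ to the functor $B \mapsto \Map_\C(A,B) \otimes D$, using the tensor structure on $\D$ from Proposition \ref{Fun is tensored}.

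Let $I$ and $J$ denote the generating cofibrations and generating acyclic cofibrations of $\D$. Define
\[ I_\C = \{F_A(i) : i \in I,\ A \in \obj(\C)\}, \qquad J_\C = \{F_A(j) : j \in J,\ A \in \obj(\C)\}. \]
The adjunction $F_A \dashv \mathrm{ev}_A$ gives a natural bijection between lifting problems for a map $p \colon F \rightarrow G$ against $F_A(i)$ and lifting problems for $p_A \colon FA \rightarrow GA$ against $i$. Consequently, the $I_\C$-injectives are precisely the objectwise acyclic fibrations and the $J_\C$-injectives are precisely the objectwise fibrations. This matches the proposed classes in the theorem, with cofibrations defined as maps having the left lifting property against the $I_\C$-injectives.

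With this setup, I would verify the hypotheses of Kan's recognition theorem in the following order: (i) $\Fun(\C,\D)$ is bicomplete, since limits and colimits are formed pointwise in $\D$; (ii) the objectwise weak equivalences satisfy two-out-of-three and closure under retracts, inherited from $\D$; (iii) the domains of maps in $I_\C$ and $J_\C$ are small with respect to the classes generated by these sets, which follows because $F_A$ preserves colimits and from the \emph{mild conditions} assumed on $I$ and $J$ in $\D$; and (iv) every transfinite composition of pushouts of maps in $J_\C$ is an objectwise weak equivalence. For (iv), since $\mathrm{ev}_A$ preserves colimits, the $A$-component of a relative $J_\C$-cell complex is a transfinite composition of pushouts in $\D$ of maps of the form $\Map_\C(B,A) \otimes \mathrm{dom}(j) \rightarrow \Map_\C(B,A) \otimes \mathrm{cod}(j)$, and Lemma \ref{pull-push corner map lemmas} together with the pushout-product axiom (MC7') shows these are acyclic cofibrations in $\D$.

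I expect step (iv) to be the main obstacle, and it is precisely where the unstated \emph{mild conditions} on $\D$ (and possibly on $\C$) come into play. One typically needs a hypothesis ensuring that tensoring by $\Map_\C(B,A)$ sends acyclic cofibrations to weak equivalences, for instance that each $\Map_\C(B,A)$ is cofibrant in $\V$ so that the pushout-product axiom applies directly, or an analogous condition on the generators themselves. Once such a hypothesis is in place, step (iv) goes through and Kan's theorem delivers the projective model structure on $\Fun(\C,\D)$ with the three classes of maps specified in the statement.
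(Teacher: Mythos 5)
The paper does not actually prove this theorem: it explicitly defers to Guillou--May \cite[4.32]{gm} for the precise hypotheses and proof, noting only that ``some mild conditions'' are suppressed. Your sketch is therefore not being compared against an in-paper argument, but it does reconstruct the standard proof correctly and, more valuably, it correctly locates where the suppressed hypotheses must enter.

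Your outline is essentially the canonical transfer argument: build the generating sets $I_\C$, $J_\C$ by applying the left adjoints $F_A$ of the evaluation functors, use the adjunction to identify the $I_\C$- and $J_\C$-injectives with the levelwise acyclic fibrations and levelwise fibrations, and then check the hypotheses of Kan's recognition theorem, with the critical step being that relative $J_\C$-cell complexes are levelwise weak equivalences. You correctly reduce this to showing that tensoring an acyclic cofibration $j$ in $\D$ by the mapping object $\Map_\C(B,A) \in \V$ yields an acyclic cofibration, and you correctly observe that this requires a cofibrancy condition on the $\Map_\C(B,A)$ (or an analogous ``cellularity'' hypothesis on the generators) so that (MC7') applies, for instance by taking the pushout-product of $j$ with $\emptyset \rightarrow \Map_\C(B,A)$. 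This is precisely the role of the unstated conditions, and it matches the setup in Guillou--May.

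Two small corrections. First, the formula $F_A(D)(B) = \Map_\C(A,B) \otimes D$ uses the tensor of $\D$ over $\V$, which is axiom (MC6) of $\D$ being a $\V$-model category, not Proposition \ref{Fun is tensored} (which concerns the induced tensor on $\Fun(\C,\D)$ rather than the tensor on $\D$ itself). Second, Lemma \ref{pull-push corner map lemmas}~\eqref{pull-push2} as stated treats a fixed cofibrant $D \in \D$ tensored against a cofibration in $\V$, whereas you need the transposed version: a cofibration in $\D$ tensored against a cofibrant object of $\V$. Both follow from (MC7'), but it is the latter specialization you are invoking, not the lemma verbatim. Neither issue affects the structure of the argument, which is sound.
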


\begin{remark}
We often refer to the weak equivalences in this model structure as \emph{levelwise} weak equivalences, and similarly for the fibrations.  Observe that the fibrant objects in this model structure are those functors $F \colon \C \rightarrow \D$ such that $FA$ is fibrant in $\D$ for every object $A$ of $\C$.
\end{remark}

\begin{lemma} 
When it exists, the projective model structure on $\Fun(\C, \D)$ has the structure of a $\V$-model category.  
\end{lemma}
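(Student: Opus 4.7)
The plan is to verify axioms (MC6) and (MC7) for the projective model structure on $\Fun(\C,\D)$. Axiom (MC6) is essentially free: Proposition \ref{Fun is tensored} shows that $\Fun(\C,\D)$ is tensored over $\V$, and Proposition \ref{Fun is cotensored} shows that it is cotensored over $\V$. Both apply here since $\D$, being a $\V$-model category, is tensored and cotensored over $\V$ by its own axiom (MC6).

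For (MC7), given a fibration $p \colon F \to G$ in $\Fun(\C,\D)$ and a cofibration $i \colon V \to W$ in $\V$, I would form the pullback corner map
\[ F^W \longrightarrow F^V \times_{G^V} G^W \]
in $\Fun(\C,\D)$ and verify that it is a fibration, and an acyclic fibration whenever $i$ or $p$ is acyclic. The idea is to reduce this to the analogous statement in $\D$, which holds by the hypothesis that $\D$ is a $\V$-model category.

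The central observation is that both sides of the corner map are computed objectwise on $\C$. Indeed, by the construction in Proposition \ref{Fun is cotensored}, the cotensor $F^V$ is defined by $F^V(A) = (FA)^V$, and limits (in particular pullbacks) in $\Fun(\C,\D)$ are computed objectwise. Consequently, evaluating the corner map at an object $A$ of $\C$ yields precisely
\[ (FA)^W \longrightarrow (FA)^V \times_{(GA)^V} (GA)^W, \]
the corner map associated to the fibration $p_A \colon FA \to GA$ in $\D$ (which is a fibration by Theorem \ref{projmodel}) and the cofibration $i \colon V \to W$ in $\V$. Since $\D$ satisfies (MC7), each such objectwise map is a fibration in $\D$, and is a weak equivalence whenever $i$ or $p_A$ is. Because the projective model structure on $\Fun(\C,\D)$ detects fibrations and weak equivalences objectwise, this yields the desired conclusion for the corner map in $\Fun(\C,\D)$.

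The only mild obstacle is the bookkeeping required to identify the objectwise evaluation of the corner map with the one coming from (MC7) in $\D$; this amounts to confirming that the cotensor and pullback constructions commute with evaluation at an object, both of which are immediate from their pointwise definitions. Once this identification is made, the verification is a direct transfer of the corresponding property from $\D$ to $\Fun(\C,\D)$.
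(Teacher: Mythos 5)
Your proposal is correct and takes essentially the same approach as the paper: cite Propositions \ref{Fun is tensored} and \ref{Fun is cotensored} for (MC6), then transfer (MC7) from $\D$ to $\Fun(\C,\D)$ by observing that the cotensor, pullbacks, fibrations, and weak equivalences are all computed levelwise. You merely spell out the objectwise identification of the corner map a bit more explicitly than the paper does.
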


\begin{proof}
We have proved axiom (MC6) in Propositions \ref{Fun is tensored} and \ref{Fun is cotensored}, so it remains to show that axiom (MC7) is satisfied. The fibrations and weak equivalences in $\Fun(\C, \D)$ are the levelwise fibrations and weak equivalences, respectively, and (MC7) holds in $\D$ by assumption. It follows that for any fibration $p \colon F \rightarrow G$ in $\Fun(\C, \D)$ and cofibration $i \colon V \rightarrow W$ in $\V$, the pullback corner map 
\[ \begin{tikzpicture}[node distance=3.5cm, auto]

\node (A) {$F^V \times_{G^V} G^W$};
\node (B) [right of=A] {$F^V$};
\node (C) [node distance=1.8cm, below of=A] {$G^W$};
\node (D) [right of=C] {$G^V$};
\node (E') [node distance=1.2cm, above of=A, left of=A] {};
\node (E) [node distance=0.8cm, left of=E'] {$F^W$};

\draw[->] (A) to node {} (B);
\draw[->] (C) to node [swap] {} (D);
\draw[->] (A) to node [swap] {} (C);
\draw[->] (B) to node {} (D);
\draw[->, bend right] (E) to node {} (C);
\draw[->, bend left=16] (E) to node {} (B);
\draw[->, dashed] (E) to node [swap]  {} (A);

\end{tikzpicture} \] 
is a fibration, and is a weak equivalence if either $i$ or $p$ is.
\end{proof}

\section{Homotopy limits, homotopy colimits, and the evaluated cotensor in simplicial model categories} \label{sectioncotensor} 

With this section, we begin our transition to functor calculus applications.  A key step in the process of defining functor calculus model structures as in \cite{br} is the redefinition of polynomial approximations in terms of the evaluated cotensor of Definition \ref{cotensor}.  We use this section to prove a result showing how this evaluated cotensor interacts with homotopy limits and colimits in simplicial model categories.  This result will be used in the next section to show that these redefined polynomial approximations are equivalent to those obtained via the original definitions.  

We begin this section by first looking at limits and colimits.  

\begin{lemma} \label{lim-colim lemma} 
Let $\D$ be a complete $\V$-category that is tensored and cotensored over $\V$, and let $\V$ be cocomplete. For any  object $D$ of $\D$, small category $\I$, and functor $\I \rightarrow \V$, there is a natural isomorphism 
\[ D^{\underset{i}{\colim}{V_i}} \cong \lim_i (D^{V_i}). \]
\end{lemma}

\begin{proof} 
Since all colimits can be built from coproducts and coequalizers \cite[3.4.11]{riehl}, it is enough to prove the lemma for these two kinds of colimits. For coproducts, we have to show that given objects $V$ and $W$ in $\V$, there is a natural isomorphism $D^{V \amalg W} \cong D^V \times D^W$. We do so by showing that for each object $E$ of $\D$ there is a natural isomorphism of sets
\[ \Hom_{\D}(E, D^{V\amalg W}) \cong \Hom_{\D}(E, D^V \times D^W). \]
A morphism $E \rightarrow D^{V \amalg W}$ is equivalently given by its adjoint morphism $E \otimes (V \amalg W) \rightarrow D$.  Since the functor $E \otimes - $ is a left adjoint (Proposition \ref{tensor-cotensor adjunctions}), it preserves colimits, so $E \otimes (V \amalg W) \cong (E \otimes V) \amalg (E \otimes W)$. Thus any morphism $E \rightarrow D^{V \amalg W}$ is equivalently given by a pair of morphisms
\[ (E \otimes V \rightarrow D, E \otimes W \rightarrow D) \]
or, via adjunction (3) from Proposition \ref{tensor-cotensor adjunctions}, a pair
\[ (E \rightarrow D^V, E \rightarrow D^W). \]
By the universal property of products, this pair is equivalent to a morphism $E \rightarrow D^V \times D^W$.

For coequalizers, we want to show that for any object $E$ of $\D$ and parallel pair $U \rightrightarrows V$ of morphisms in $\V$ there is a natural isomorphism
\[ \Hom_{\D}(E, D^W) \cong \Hom_{\D}(E, Z), \]
where $W$ is the coequalizer
\[ U \rightrightarrows V \rightarrow W \]
in $\mathcal{V}$, and $Z$ is the equalizer
\[ Z \rightarrow D^V \rightrightarrows D^U. \]
A morphism $E \rightarrow D^W$ is equivalently given by a morphism $E \otimes W \rightarrow D$, and since $E \otimes -$ preserves coequalizers, it is equivalent to a morphism $E \otimes V \rightarrow D$ such that the diagram
\[ E \otimes U \rightrightarrows E \otimes V \rightarrow D \]
commutes.  Applying adjunction (3) of Proposition \ref{tensor-cotensor adjunctions} gives the diagram
\[ E \rightarrow D^V \rightrightarrows D^U \]
which, by the universal property of equalizers, is equivalent to a morphism $E \rightarrow Z$.
\end{proof}

We next consider how the evaluated cotensor of Definition \ref{cotensor} interacts with limits and colimits.

\begin{lemma} \label{l:homlim}  
Let $\D$ be a complete  $\V$-category that is tensored and cotensored over $\V$, and let $\V$ be cocomplete.
Let $\I$ be a small category and let $\mathbf{X}$ be a functor $\mathbf{X} \colon \I \rightarrow \Fun(\C,\V).$  Then for any $\V$-functor $F \colon \C \rightarrow \D$, there is a natural isomorphism 
\[ F^{\underset{i}{\colim} \mathbf{X}i} \cong \lim_{i} F^{\mathbf{X}i}. \] 
\end{lemma}

\begin{proof}
Using the definition of the evaluated cotensor, we compute
\[ \begin{aligned}
    F^{\underset{i}{\colim} \mathbf{X}_i} & :=\int_{C} FC^{\underset{i}{\colim} \mathbf{X}_iC} \\
    & \cong \int_{C}\lim_{i} FC^{\mathbf{X}_iC} \\
    & \cong \lim_{i} \int_{C} FC^{\mathbf{X}_iC} \\
    & := \lim_{i} F^{\mathbf{X}i}.
\end{aligned} \]
The first isomorphism holds by Lemma \ref{lim-colim lemma}, and the second isomorphism holds because limits commute \cite[3.8.1]{riehl}. 
\end{proof}

For the remainder of this section, we let $\V=\s$, the category of simplicial sets, and hence work exclusively with functors between simplicial model categories.  

Since we are working in a homotopy-theoretic setting, we want to work with homotopy limits and colimits, so that our constructions are homotopy invariant.  We start by recalling the standard constructions of homotopy limits and colimits in simplicial model categories as found in \cite[18.1.2, 18.1.8]{hirschhorn}.  We note that these constructions include an implicit assumption that the diagrams are objectwise fibrant in the case of homotopy limits and cofibrant for homotopy colimits. 

\begin{definition} \label{d:hocolim} 
Let $\M$ be a simplicial model category and $\I$ a small category. If $\mathbf{X}$ is an $\I$-diagram in $\M$, then the \emph{homotopy colimit of $\mathbf{X}$}, denoted by $\underset{i}{\hocolim} \mathbf{X}_i$, is the coequalizer of the maps 
\[ \coprod_{\sigma \colon a \rightarrow a'} \mathbf{X}_a \otimes B(a'\downarrow \I)^{\op} \underset{\psi}{\overset{\phi}{\rightrightarrows}} \coprod_{a} \mathbf{X}_a \otimes B(a \downarrow \I)^{\op}, \] 
where $\phi$ is defined on the summand corresponding to $\sigma \colon a \rightarrow a'$ to be the composite of the map 
\[ \sigma_* \otimes \Id_{B(a'\downarrow \I)} \colon \mathbf{X}_a \otimes B(a'\downarrow \I)^{\op} \rightarrow  \mathbf{X}_{a'} \otimes B(a'\downarrow \I)^{\op} \] 
with the natural injection into the coproduct, and $\psi$ is given by the composite of the map 
\[ \Id_{\mathbf{X}_a}\otimes B(\sigma^*) \colon \mathbf{X}_a \otimes B(a \downarrow \I)^{\op} \rightarrow \mathbf{X}_{a'} \otimes B(a'\downarrow \C)^{\op} \] 
with the natural injection into the coproduct.  Here, $\sigma^*$ denotes the functor induced by precomposition with $\sigma$, $B$ denotes the nerve of a category, and $a\downarrow \I$  denotes the category of objects of $\I$ under $a$; see \cite[11.8.3, 14.1.1]{hirschhorn} for details.
\end{definition}

\begin{definition} 
Let $\M$ be a simplicial model category and let $\I$ be a small category. If $\mathbf{X}$ is an $\I$-diagram in $\M$, then the \emph{homotopy limit} of $\mathbf{X}$, denoted by $\underset{i}{\holim} \mathbf{X}_i$, is defined as the equalizer of the maps 
\[ \prod_{a} \mathbf{X}_a^{B(\I \downarrow a)} \underset{\psi}{\overset{\phi}{\rightrightarrows}} \prod_{(\sigma \colon a \rightarrow a')} \mathbf{X}_{a'}^{B(\I \downarrow a)}, \] 
where the projection of the map $\phi$ on the factor $\sigma \colon a \rightarrow a'$ is the composite of a natural projection from the product with the map
\[ \sigma_*^{\Id_{B(\I \downarrow a)}} \colon \mathbf{X}_a^{B(\I \downarrow a)} \rightarrow \mathbf{X}_{a'}^{B(\I \downarrow a)}, \]
and the projection of the map $\psi$ is given by the composite of a natural projection from the product with the map 
\[ (\Id_{\mathbf{X}_{a'}})^{B(\sigma_*)} \colon \mathbf{X}_{a'}^{B(\I \downarrow a)} \rightarrow \mathbf{X}_{a'}^{B(\I \downarrow a)}. \] 
Here, $\sigma_*$ denotes the functor induced by postcomposition with $\sigma$, and $\I \downarrow a$ denotes the category of objects in $\I$ over $a$.
\end{definition}

Using these models for homotopy limits and colimits, we establish the desired result for the evaluated cotensor.

\begin{proposition} \label{p:homholim}
Let $\I$ be a small category and $\mathbf{X}$ an $\I$-diagram in $\Fun(\C,\s)$.  Let $\D$ be a cofibrantly generated simplicial model category satisfying the conditions of Theorem \ref{projmodel}, so that $\Fun(\C,\D)$ has the structure of a simplicial model category. Then for any $F$ in $\Fun(\C,\D)$, 
\[ F^{\underset{i}{\hocolim} \mathbf{X}_i} \cong \underset{i}{\holim} F^{\mathbf{X}_i}. \]
\end{proposition}

\begin{proof}
By the definition of homotopy colimit and Lemma \ref{l:homlim}, we have 
\[ \begin{aligned}
    F^{\underset{i}{\hocolim} \mathbf{X}_i} & = F^{\colim \left(\coprod_{i \rightarrow i'} \mathbf{X}_i\otimes B(i'\downarrow \I)^{\op} \rightrightarrows \coprod _i \mathbf{X}_i\otimes B(i \downarrow \I)^{\op} \right)} \\
    & \cong \lim \left (\prod_i F^{\mathbf{X}_i \otimes B(i \downarrow \I)^{\op}} \rightrightarrows \prod_{i\rightarrow i'} F^{\mathbf{X}_i\otimes B(i'\downarrow \I)^{\op}} \right).
\end{aligned} \]

The definition of the evaluated cotensor, the associativity of tensoring and cotensoring \cite[9.1.11]{hirschhorn}, and Proposition \ref{l:homcotensor} yield
\[ \begin{aligned}
F^{\mathbf{X}_i\otimes B(i'\downarrow \I)^{\op}} & = \int_{C} FC^{\mathbf{X}_i C \otimes B(i'\downarrow \I)^{\op}} \\
& \cong \int_{C} \left( FC^{\mathbf{X}_iC} \right)^{B(i' \downarrow \I)^{\op}} \\
& \cong \left( \int_{C} FC^{\mathbf{X}_iC}  \right)^{B(i' \downarrow \I)^{\op}} \\
& = \left( F^{\mathbf{X}_i} \right)^{B(i'\downarrow \I)^{\op}}.
\end{aligned} \]
Combining this computation with the previous equivalence, we obtain
\[ \begin{aligned}
    F^{\underset{i}{\hocolim} \mathbf{X}_i} & = \lim \left (\prod_i F^{\mathbf{X}_i\otimes B(i \downarrow \I)^{\op}} \rightrightarrows \prod_{i \rightarrow i'} F^{\mathbf{X}_i \otimes B(i'\downarrow \I)^{\op}} \right) \\
    & \cong \lim \left( \prod_i \left( F^{\mathbf{X}_i} \right) ^{B(\I^{\op}\downarrow i)} \rightrightarrows \prod_{i \rightarrow i'} \left(F^{\mathbf{X}_i} \right)^{B(\I^{\op} \downarrow i')} \right )\\
    & = \holim_i F^{\mathbf{X}_i}, 
\end{aligned} \]
where the middle isomorphism follows from \cite[11.8.7]{hirschhorn}.
\end{proof}

\section{An application to functor calculus}
\label{sectionfunctorcalc}

In this section, we establish results that allow one to replace the standard constructions of some functor calculi, particularly those of \cite{goodwillie3} and \cite{bjm}, with enriched constructions that can be used to produce cofibrantly generated model structures for these functor calculi.  In the case of the Goodwillie calculus, Biedermann and R\"ondigs have shown in \cite{br} how to use these enriched constructions to build cofibrantly generated $n$-excisive model structures. In the sequel to this paper \cite{sequel}, we will provide a general result that recovers this structure and produces similar structures for other functor calculi.

After stating a corollary to Proposition \ref{p:homholim}, we divide the remainder of this section into three parts.  In the first subsection, we apply Corollary \ref{c:reprep} to redefine the building blocks $T_n$ of Goodwillie's $n$-excisive approximations to functors in terms of the evaluated cotensor of Definition \ref{Fend} and representable functors of Definition \ref{representable}.  In the second subsection, we make a similar redefinition of the discrete degree $n$ approximations $\bot_n$ of Bauer, Johnson, and McCarthy.  Finally, in the third subsection, we show that these redefined building blocks define simplicial endofunctors on $\Fun(\C,\D)$.

Throughout this section, we work with simplicial enrichments.  The categories $\C$ and $\D$ are assumed to be simplicial categories, with $\D$ a simplicial model category.  While we would like to ask that $\C$ also be a simplicial model category, we need some smallness conditions so that the category $\Fun(\C,\D)$ of simplicial functors can be given the projective model structure as above.  Yet, we also want to be able to apply certain model category techniques, such as homotopy pushouts, in $\C$.

Thus, we ask that $\C$ be a small simplicial subcategory of a simplicial model category that has a final object $\ast_\C$.  In addition, we want $\C$ to be tensored  in finite simplicial sets; to satisfy this condition we may allow $\C$ to be \emph{essentially small}, namely, equivalent to a small simplicial category.  Each of our two situations requires some further assumptions, but we defer mention of them to the appropriate subsections below.

The main results of the following two subsections are consequences of the following corollary to Proposition \ref{p:homholim}. 

\begin{corollary} \label{c:reprep}  
Let $\I$ be a small category and $\mathbf{X}$ be an $\I$-diagram in $\C$.  Then for a simplicial functor $F$, there is a natural isomorphism 
\[ \underset{i}{\holim} F\mathbf{X}_i \cong F^{\underset{i}{\hocolim} R^{\mathbf{X}_i}}. \] 
\end{corollary}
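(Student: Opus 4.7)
The plan is to combine two results already established in the paper: Proposition \ref{p:homholim} (the evaluated cotensor converts homotopy colimits to homotopy limits) and the enriched Yoneda Lemma (Lemma \ref{Yoneda}). The key observation is that the right-hand side $F^{\hocolim_i R^{\mathbf{X}_i}}$ is already poised for Proposition \ref{p:homholim} to apply to the diagram $i \mapsto R^{\mathbf{X}_i}$ in $\Fun(\C,\s)$, producing $\holim_i F^{R^{\mathbf{X}_i}}$. It then only remains to identify $F^{R^{\mathbf{X}_i}}$ with $F\mathbf{X}_i$, which is exactly the content of the Yoneda Lemma in our notation.

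First I would note that each representable $R^{\mathbf{X}_i} = \Map_\C(\mathbf{X}_i, -)$ is indeed a functor $\C \to \s$, so $i \mapsto R^{\mathbf{X}_i}$ constitutes an $\I$-diagram in $\Fun(\C,\s)$, making Proposition \ref{p:homholim} applicable. Applying it directly yields
\[ F^{\underset{i}{\hocolim} R^{\mathbf{X}_i}} \cong \underset{i}{\holim}\, F^{R^{\mathbf{X}_i}}. \]
Next I would unpack the evaluated cotensor via Definition \ref{cotensor}, giving $F^{R^{\mathbf{X}_i}} = \int_A FA^{\Map_\C(\mathbf{X}_i, A)}$. This is precisely the end appearing in the statement of Lemma \ref{Yoneda} applied to the object $\mathbf{X}_i$ of $\C$, so the Yoneda Lemma produces a natural isomorphism $F^{R^{\mathbf{X}_i}} \cong F\mathbf{X}_i$.

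Finally I would stitch the two isomorphisms together, obtaining
\[ F^{\underset{i}{\hocolim} R^{\mathbf{X}_i}} \cong \underset{i}{\holim}\, F^{R^{\mathbf{X}_i}} \cong \underset{i}{\holim}\, F\mathbf{X}_i, \]
and observe that naturality in $i$ (which is required for the homotopy limit on the right to make sense) follows because the Yoneda isomorphism is natural in $\mathbf{X}_i$, an instance of the enriched Yoneda embedding described in Definition \ref{representable}.

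There is essentially no main obstacle here: the corollary is a formal consequence of assembling Proposition \ref{p:homholim} with Lemma \ref{Yoneda}. The only mild technical point to be careful about is ensuring that the hypotheses of Proposition \ref{p:homholim} are met, namely that $F$ is levelwise fibrant (which is assumed) and that the diagram $R^{\mathbf{X}_\bullet}$ lands in $\Fun(\C,\s)$ (which it does, since representables are simplicial functors). No additional fibrancy assumption on the diagram $\mathbf{X}$ itself is needed because the Yoneda identification is an isomorphism, not merely a weak equivalence.
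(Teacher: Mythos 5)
Your proof is correct and follows essentially the same route as the paper's: apply Proposition \ref{p:homholim} to move the homotopy colimit out as a homotopy limit, unpack the evaluated cotensor via Definition \ref{cotensor}, and then invoke the enriched Yoneda Lemma (Lemma \ref{Yoneda}) to identify $F^{R^{\mathbf{X}_i}}$ with $F\mathbf{X}_i$. Your extra remarks on naturality and the lack of any fibrancy hypothesis on $\mathbf{X}$ are accurate and a welcome clarification.
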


\begin{proof}
By Proposition \ref{p:homholim}, followed by the enriched Yoneda Lemma (Lemma \ref{Yoneda}), we have
\[ \begin{aligned}
F^{\underset{i}{\hocolim} R^{\mathbf{X}_i}} & \cong \underset{i}{\holim} F^{R^{\mathbf{X}_i}} \\
& = \underset{i}{\holim} \int_{C}FC^{R^{\mathbf{X}_i}C} \\
& \cong \underset{i}{\holim} F\mathbf{X}_i. 
\end{aligned} \] 
\end{proof}

We apply this result to two examples in the next two subsections.

\subsection{A simplicial representable replacement for Goodwillie calculus} \label{tnrep}

In the Goodwillie calculus, the $n$-\emph{excisive approximation} to a functor $F$ is a functor $P_nF$, which is defined as the homotopy colimit of a sequence of functors:
\[ P_nF:= \hocolim_k \left(F \rightarrow T_nF \rightarrow T_n^2F \rightarrow \dots \rightarrow T_n^kF \rightarrow \dots \right). \]
In this section, we are interested in the functor $T_n$, and in particular we want to show how, for simplicial functors, it can be defined in terms of certain representable functors, using Corollary \ref{c:reprep}.

Throughout this subsection, we assume the conditions  on the categories $\C$ and $\D$ described in the introduction to the section; we additionally assume that $\C$ is closed under finite homotopy pushouts.

To define $T_nF$ as Goodwillie did, we start with some notation.  For any $n \geq 1$, let $\mathbf n=\{1, \ldots, n\}$.  We think of the power set $\P(\mathbf n)$ as a poset whose elements are subsets of $\mathbf n$ and whose ordering is given by set inclusion.  We can thus regard $\P(\mathbf n)$ as a category and will make use of two of its subcategories. Let $\P_{\leq 1}(\mathbf{n})$ be the full subcategory whose objects are the subsets of $\mathbf{n}$ of cardinality $0$ and $1$ and
$\P_0(\mathbf{n})$ be the full subcategory of $\P(\mathbf n)$ whose objects are the nonempty subsets.  
For instance, the diagrams below represent $\P(\mathbf 3)$, $\P_0(\mathbf 3)$ and $\P_{\leq 1}(\mathbf{3})$, respectively:

\begin{center}
\begin{tikzpicture}[node distance=2.5cm, auto, scale=1]

\node (A) {$\emptyset$};
\node (B) [right of=A] {$\{1\}$};
\node (C) [node distance=2.3cm, below of=A] {$\{2\}$};
\node (D) [node distance=2.3cm, below of=B] {$\{1,2\}$};

\node (X) [node distance=1cm, above of=A] {};

\node (E) [node distance=1.5cm, right of=X] {$\{3\}$};
\node (F) [right of=E] {$\{1,3\}$};
\node (G) [node distance=2.3cm,below of=E] {$\{2,3\}$};
\node (H) [node distance=2.3cm,below of=F] {$\{1,2,3\}$};

\draw[->] (A) to node {} (B);
\draw[->] (A) to node {} (C);
\draw[->] (C) to node {} (D);
\draw[->] (B) to node {} (D);

\draw[->] (A) to node {} (E);
\draw[->] (B) to node {} (F);
\draw[->] (E) to node {} (F);
\draw[->] (F) to node {} (H);
\draw[->] (D) to node {} (H);

\draw[->, dashed] (G) to node {} (H);
\draw[->, dashed] (C) to node {} (G);
\draw[->, dashed] (E) to node {} (G);

\node (A') [node distance=4.8cm, right of=A] {};
\node (B') [right of=A'] {$\{1\}$};
\node (C') [node distance=2.3cm, below of=A'] {$\{2\}$};
\node (D') [node distance=2.3cm, below of=B'] {$\{1,2\}$};

\node (X') [node distance=1cm, above of=A'] {};

\node (E') [node distance=1.5cm, right of=X'] {$\{3\}$};
\node (F') [right of=E'] {$\{1,3\}$};
\node (G') [node distance=2.3cm,below of=E'] {$\{2,3\}$};
\node (H') [node distance=2.3cm,below of=F'] {$\{1,2,3\}$};

\draw[->] (C') to node {} (D');
\draw[->] (B') to node {} (D');

\draw[->] (B') to node {} (F');
\draw[->] (E') to node {} (F');
\draw[->] (F') to node {} (H');
\draw[->] (D') to node {} (H');

\draw[->, dashed] (G') to node {} (H');
\draw[->] (C') to node {} (G');
\draw[->] (E') to node {} (G');

\node (A'') [node distance=5.5cm, right of=A']{$\emptyset$};
\node (B'') [right of=A''] {$\{1\}$.};
\node (C'') [node distance=2.3cm, below of=A''] {$\{2\}$};
\node (X'') [node distance=1cm, above of=A''] {};
\node (E'') [node distance=1.5cm, right of=X''] {$\{3\}$};

\draw[->] (A'') to node {} (B'');
\draw[->] (A'') to node {} (C'');
\draw[->] (A'') to node {} (E'');

\end{tikzpicture}
\end{center}

For an object $A$ in $\C$ and a finite set $U$, we define the \emph{fiberwise join} $A\ast U$ in $\C$ to be the homotopy colimit of the  $\P_{\leq 1}(\mathbf{n})$-diagram in $\C$ that assigns to $\varnothing$ the object $A$ and assigns to each one-element set $\{i\}$ the final object $\ast_\C$.  So, for $n=1$, the fiberwise join is just the simplicial cone on $A$, and in general, $A\ast U$ is $n$ copies of the simplicial cone on $A$ glued along $A$. As an example, for $|U| = 4$, $A\ast U$ is the homotopy colimit of the diagram below: 
\begin{center}
\begin{tikzpicture}[node distance=2.5cm, auto, scale=1]

\node (A) {$A$};
\node (B) [right of=A] {$\ast_\C$.};
\node (C) [left of=A] {$\ast_\C$};
\node (X) [node distance=1cm, below of=A] {};
\node (E) [node distance=1.5cm, right of=X] {$\ast_\C$};
\node (F) [node distance=1.5cm, left of=X] {$\ast_\C$};

\draw[->] (A) to node {} (B);
\draw[->] (A) to node {} (C);
\draw[->] (A) to node {} (E);
\draw[->] (A) to node {} (F);

\end{tikzpicture}
\end{center}

The following definition is due to Goodwillie in the case where $\C$ and $\D$ are each either the category of spaces or the category of spectra.  

\begin{definition} \cite{goodwillie3} \label{d:tn}
Given a functor $F \colon \C \rightarrow \D$, we define the functor $T_nF \colon \C \rightarrow \D$ by 
\[ T_nF(A) = \underset{U \in \mathcal{P}_0(\mathbf{n+1})}{\holim} F(A \ast U). \]
\end{definition}

Following Biedermann and R\"ondigs \cite{br}, we have the following version of $T_nF$ for a simplicial functor $F$.

\begin{definition} \label{rept}
Given a simplicial functor $F \colon \C \rightarrow \D$, we define the functor $T_n^{R}F \colon \C \rightarrow \D$ by 
\[ T_n^{R}F(A) =F^{\underset{U}{\hocolim}{R^{A \ast U}}} = \int_{C} FC^{ \underset{U}{\hocolim} R^{A \ast U}C} = \int_{C} FC^{ \underset{U}{\hocolim} \Map_{\C}(A \ast U,C)}, \]
where the homotopy colimits are taken over all objects $U$ of $\P_0(\mathbf{n+1})$. 
\end{definition}

The next proposition is an immediate consequence of Corollary \ref{c:reprep}.

\begin{proposition}\label{p:brrep}
If $F \colon \C \rightarrow \D$ is a simplicial functor, then the functors $T_nF$ and $T_n^{R}F$ are isomorphic.
\end{proposition}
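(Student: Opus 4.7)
The plan is to recognize Proposition \ref{p:brrep} as essentially a direct application of Corollary \ref{c:reprep}, once we identify the correct indexing category and diagram. Fix an object $A$ of $\C$. I would set $\I = \mathcal{P}_0(\mathbf{n+1})$ and consider the $\I$-diagram $\mathbf{X}^A \colon \mathcal{P}_0(\mathbf{n+1}) \rightarrow \C$ defined on objects by $U \mapsto A \ast U$, with functoriality in $U$ coming from the subset inclusions and the naturality of the fiberwise join construction (i.e., a subset inclusion $U \subseteq U'$ induces a map of homotopy pushouts $A \ast U \rightarrow A \ast U'$).

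With this diagram in place, the hypothesis that $F$ takes fibrant values in $\D$ is precisely the hypothesis that $F$ is a levelwise fibrant object of $\Fun(\C,\D)$, so Corollary \ref{c:reprep} applies and yields, at the object $A$,
\[ T_n F(A) \;=\; \underset{U \in \mathcal{P}_0(\mathbf{n+1})}{\holim} F(A \ast U) \;\cong\; F^{\underset{U}{\hocolim}\, R^{A \ast U}} \;=\; T_n^{R} F(A). \]
This is an objectwise isomorphism; the remaining step is to promote it to an isomorphism of simplicial functors $T_n F \cong T_n^R F$.

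For naturality in $A$, I would observe that the assignment $A \mapsto (U \mapsto A \ast U)$ is itself functorial in $A$ (again by the functoriality of homotopy pushouts), so a morphism $A \rightarrow A'$ in $\C$ induces a map of $\mathcal{P}_0(\mathbf{n+1})$-diagrams $\mathbf{X}^A \rightarrow \mathbf{X}^{A'}$ in $\C$. Applying $F$ and taking the homotopy limit gives the map $T_n F(A) \rightarrow T_n F(A')$, while applying $R^{(-)}$ and taking the homotopy colimit, then the evaluated cotensor $F^{(-)}$, gives the map $T_n^R F(A) \rightarrow T_n^R F(A')$. The isomorphism of Corollary \ref{c:reprep} is natural in the diagram, so these maps are compatible. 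Essentially no step here is really an obstacle: the identification of the indexing data is routine, and the only substantive content is Corollary \ref{c:reprep} itself, which has already been established. The mildly delicate bookkeeping is tracking that the simplicial enrichment of $F$ is what makes $R^{(-)}$, the evaluated cotensor, and the Yoneda lemma (used inside the proof of Corollary \ref{c:reprep}) all applicable in the simplicial setting, but this is packaged into the hypotheses already assumed at the start of the section.
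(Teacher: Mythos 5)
Your proposal is correct and follows the same route as the paper: both proofs consist of applying Corollary \ref{c:reprep} with $\I = \mathcal{P}_0(\mathbf{n+1})$ and the diagram $U \mapsto A\ast U$, using the fibrant-values hypothesis to meet the levelwise-fibrancy condition of the corollary. Your additional paragraph on naturality in $A$ is a reasonable extra care that the paper's one-line proof leaves implicit, but it does not change the substance of the argument.
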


\begin{proof}
For an object $A$ in $\C$, applying Corollary \ref{c:reprep} to $T_nF(A)$ gives us
\[ \begin{aligned}
    T_nF(A) & = \underset{U}\holim F(A\ast U)\\
    &\cong F^{\underset{U}\hocolim R^{A\ast U}}\\
    &=T^R_nF(A).
\end{aligned} \]
\end{proof}

\begin{remark}
To define their $n$-excisive model structures, Biedermann and R\"ondigs define $T^R_nF(A)$ as $F^{A_n}$ where $A_n$ is simplicially homotopy equivalent to $\underset{U}{\hocolim} R^{A \ast U}$ in $\Fun(\C,\s).$  Since simplicial homotopy equivalences are preserved by simplicial functors, it follows that $F^{A_n}\simeq T_n^RF(A)\cong T_nF(A)$.
\end{remark}

\subsection{A simplicial representable replacement for discrete calculus} \label{discreterep}

The discrete functor calculus of Bauer, Johnson, and McCarthy \cite{bjm} is an adaptation of the abelian functor calculus of Johnson and McCarthy \cite{jm}  to simplicial model categories.  Like Goodwillie calculus, it associates a ``degree $n$" polynomial approximation $\Gamma_nF$ to a functor $F$.  The notion of ``degree $n$" in this case is weaker than that of Goodwillie, and closer in spirit to the notion of degree $n$ for polynomial functions.  

In this subsection, we make two assumptions on the categories $\C$ and $\D$, in addition to those specified at the beginning of the section.  We assume  that $\C$  is closed under finite coproducts, and that $\D$ is {\emph{pointed}} with initial and terminal object $\star$. (To build polynomial approximations in \cite{bjm}, we also require that $\D$ be stable, but for the results in this paper the stability condition is not necessary.)   

Recall that a comonad $(\bot, \Delta, \varepsilon)$ acting on a category $\A$ consists of an endofunctor $\bot \colon \A \rightarrow \A$ together with natural transformations $\Delta \colon \bot\rightarrow \bot\bot$ and $\varepsilon \colon \bot\rightarrow \Id_\A$ satisfying certain identities.  For an object $A$ in $\A$, there is an associated simplicial object 
\[ [k] \mapsto \bot^{k+1}A \]
whose face and degeneracy maps are defined using the natural transformations $\varepsilon$ and $\Delta$.  (See \cite[\S 8.6]{weibel} for more details, noting that the author uses the term ``cotriple" for what we are calling a ``comonad" here.)

The functor $\Gamma_nF$ is defined in terms of a comonad $\bot_{n+1}$ that acts on the category of functors from $\C$ to $\D.$  More explicitly, it is the homotopy cofiber given by  
\[ \Gamma_nF:=\hocofiber(\vert \bot_{n+1}^{\ast+1}F\vert \rightarrow F) \]
where $\vert \bot_{n+1}^{\ast+1}F\vert$ is the realization of the standard simplicial object associated to the comonad $\bot_{n+1}$ acting on $F.$ 

We first review the definition of $\bot_n$. We again make use of the power set $\P(\mathbf {n})$ of the set $\mathbf{n}=\{1, \ldots, n\}$, regarded as a category, and the category $\P_0(\mathbf{2})$, the full subcategory of $\P(\mathbf{2})$ whose objects are the nonempty subsets of $\{1,2\}.$ 

For an object $A$ in $\C$ and a functor $F \colon \C\rightarrow \D$, let $F^{\mathbf{n}}(A,-)$ be the $\P(\mathbf{n})$-diagram that assigns to $U \subseteq \mathbf{n}$ the object
\[ F^{\mathbf{n}}(A,U):=F \left(\coprod _{i\in \mathbf{n}} A_i(U) \right), \]
where
\begin{equation} \label{e:Ai}
A_i(U):= \begin{cases}
A & i\notin U, \\
\ast_\mathcal{C} & i\in U.
\end{cases}
\end{equation} 
The value of the functor $\bot_{n} F \colon \C\rightarrow \D$ at the object $A$ is defined as an iterated homotopy fiber of the diagram $F^{\mathbf n}(A, -)$, as we now explain.

For any $F \colon \C \rightarrow \D$ as above and any object $A$ of $\C$, let $F^{\P_0(\mathbf{2})}_n \colon \C \times {\P_0(\mathbf{2})}^{\times n} \rightarrow \D$ be given by 
\[ F^{\P_0(\mathbf{2})}_n(A;(S_1, \dots, S_n)):=
\begin{cases}
F^{\mathbf{n}}(A,\varphi(S_1, \dots, S_n)) & S_i\neq \{2\} \text{ for all } i, \\
\star & \text{otherwise,}
\end{cases} \]
where 
\begin{equation} \label{e:phi}
\varphi(S_1, \dots, S_n)=\{i\ |\ S_i=\{1,2\}\}.
\end{equation}

\begin{example}
To help make sense of this definition, we consider the example of $n=2$.  Then ${\P_0(\mathbf{2})} \times {\P_0(\mathbf{2})}$ can be depicted as
\[ \xymatrix{ (\{1\}, \{1\}) \ar[d] \ar[r] & (\{1\}, \{1,2\}) \ar[d] & (\{1\}, \{2\}) \ar[l] \ar[d] \\
(\{1,2\}, \{1\}) \ar[r] & (\{1, 2\}, \{1, 2\}) & (\{1,2\},\{2\}) \ar[l] \\
(\{2\},\{1\})\ar[u] \ar[r] & (\{2\}, \{1,2\}) \ar[u] & (\{2\},\{2\}). \ar[l] \ar[u]} \]
\vskip .2 in 
The relevant values of $\varphi$ in this case are
\[ \begin{aligned}
\varphi (\{1\}, \{1\}) & = \varnothing \\
\varphi (\{1\}, \{1,2\}) & = \{2\} \\
\varphi(\{1,2\}, \{1\}) & = \{1\} \\
\varphi(\{1,2\},\{1,2\})&=\{1,2\}.
\end{aligned} \]

Then $F_2^{\P_0(\mathbf{2})}(A,-)$ is given by the diagram 
\[ \xymatrix{F^2(A,\varnothing) \ar[r] \ar[d] & F^2(A,\{2\}) \ar[d] & \star\ar[l] \ar[d] \\
F^2(A, \{1\}) \ar[r] & F^2(A, \{1,2\}) & \star \ar[l] \\
\star \ar[u] \ar[r] & \star \ar[u] & \star, \ar[u] \ar[l]} \]
which can be rewritten as
\[ \xymatrix{F(A \amalg A) \ar[r] \ar[d]& F(A \amalg \ast_\C) \ar[d] & \star \ar[l] \ar[d] \\
F(\ast_\C \amalg A) \ar[r] & F(\ast_\C \amalg \ast_\C) & \star \ar[l] \\
\star \ar[u] \ar[r] & \star \ar[u] & \star. \ar[u] \ar[l]} \]
\end{example}

\begin{definition}
For any functor $F \colon \C \rightarrow \D$ and any object $A$ of $\C$, define
\[ \bot_{n} F(A):=\holim_{(S_1, \dots, S_n)} F^{\P_0(\mathbf{2})}_n \left(A; (S_1, \dots, S_n) \right), \]
where the homotopy limit is taken over the category ${\P_0(\mathbf{2})}^{\times n}$.
\end{definition}

Observe that $\bot_nF(A)$ is equivalent to the iterated homotopy fiber of the $\P(\mathbf{n})$-diagram that assigns to the set $U\subseteq \mathbf{n}$ the object $F^{\mathbf n}(A,U).$  For example, $\bot_2F(A)$ is equivalent to the object obtained by taking the homotopy fibers of the vertical morphisms in 
\[ \xymatrix{F(A\coprod A)\ar[r]\ar[d]&F(\A\coprod \ast_\C)\ar[d]\\
F(\ast_\C\coprod A)\ar[r]&F(\ast_\C\coprod \ast_\C)} \]
and then taking the homotopy fiber of the induced map between the two vertical homotopy fibers. See \cite[\S 3.1]{bjm} for details. 

Our next step is to define a representable version of  $\bot_nF$  in a manner similar to that used for $T_nF$  in Definition \ref{rept} and Proposition \ref{p:brrep}. To do so, we rewrite $F_n^{\P_0(\mathbf{2})}(-;(S_1,\dots, S_n))$ as a functor applied to objects in a single category $\C_\bot$,   obtained from $\C$ by adjoining an initial object $\bot$. In other words, $\C_\bot$ is the category whose objects are the objects of $\C$ together with one additional object $\bot$, and whose morphisms are the morphisms of $\C$ together with the identity morphism on $\bot$ and a unique morphism $\bot \rightarrow A$ for each object $A$ in $\C$.

Given a functor $F \colon \C \rightarrow \D$, we can extend it to a functor $F \colon \C_\bot \rightarrow \D$ 
defined on objects by 
\[ F(A):=
\begin{cases}
F(A) & A \in \ob(\C)\\
\star & A=\bot
\end{cases} \]
and on morphisms by 
\[ F(f \colon A \rightarrow B) =
\begin{cases}
F(f) & f \text{ is a morphism in }\C \\
\Id_\star & f=\Id_\bot \\
\star \rightarrow F(B) & A=\bot,
\end{cases} \]
where in the last case the arrow is the unique one from the zero object $\star$ of $\D$.  It is straightforward to confirm that $\Fun(\C_\bot, \D)$ is also a simplicial model category

For an object $A$ in $\C$ and an object $S=(S_1, \dots, S_n)$ of ${\P_0(\mathbf{2})}^{\times n}$, using \eqref{e:Ai} and \eqref{e:phi}, we define 
\begin{equation} \label{sqcupbotdefn}
A \sqcup S:=
\begin{cases}
\bot & S_i=\{2\}\ \text{ for some } i,\\
\underset{i\in \mathbf{n}}{\coprod} A_i(\varphi(S)) & \text{otherwise}.
\end{cases}
\end{equation}
Note that 
\[
F_n^{{\P_0(\mathbf{2})}}(A;S)=F(A\sqcup S),
\] so  we can rewrite $\bot_nF(A)$ as
\[ \bot_nF(A)=\underset{S}{\holim}F(A \sqcup S). \]

Applying Corollary \ref{c:reprep}, we see that
\[
\bot_nF(A)\cong F^{\underset{S}{\hocolim}R^{A\sqcup S}}=\int_{C}FC^{\underset{S}{\hocolim}R^{A\sqcup S}(C)}.
\]
We summarize these ideas in the definition of the representable version of $\bot_nF$ and subsequent proposition.

\begin{definition} \label{d:repbot}
Let $F \colon \C \rightarrow \D$ be a simplicial functor.  We define, for an object $A$ in $\C$,
\begin{equation}
    \bot_{n}^R F(A):=F^{\underset{S}{\hocolim}R^{A\sqcup S}}=\int_{C}FC^{\underset{S}{\hocolim}R^{A\sqcup S}(C)},
\end{equation}
where each homotopy colimit is taken over the category ${\P_0(\mathbf{2})}^{\times n}$.
\end{definition}

\begin{prop}\label{p:botdefagree}
Let $F \colon \C \rightarrow \D$ be a simplicial functor.  Then $\bot_nF$ is isomorphic to $\bot_n^RF$. 
\end{prop}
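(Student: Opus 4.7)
The plan is to reduce the statement to a single application of Corollary \ref{c:reprep}, following the same pattern as the proof of Proposition \ref{p:brrep} for the Goodwillie building block $T_n$, but working in the augmented source category $\C_\bot$ rather than in $\C$ itself.

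The first step is a bookkeeping exercise. With $F$ extended to $F \colon \C_\bot \rightarrow \D$ and $A \sqcup S$ as in \eqref{sqcupbotdefn}, I will check that the formula defining the $\bot_n$ diagram satisfies
\[ F^\J_n(A;(S_1,\dots,S_n)) \;=\; F(A \sqcup S) \]
on the nose. When some $S_i = \{1\}$, both sides equal $\star$, by \eqref{sqcupbotdefn} together with the convention $F(\bot) = \star$; in the remaining case, both sides equal $F\bigl(\coprod_{i\in\mathbf{n}} A_i(\varphi(S))\bigr)$ by direct inspection of the two definitions. Consequently, $S \mapsto A \sqcup S$ defines a $\J^{\times n}$-diagram $\mathbf{X}$ in $\C_\bot$ with
\[ \bot_n F(A) \;=\; \holim_{S \in \J^{\times n}} F\mathbf{X}_S. \]

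Next, I will apply Corollary \ref{c:reprep} with $\C$ replaced by $\C_\bot$ and $\I = \J^{\times n}$. One has to observe that $\C_\bot$ inherits the hypotheses required for the corollary: it is a small simplicial category, and the extended $F$ remains levelwise fibrant because $F(\bot) = \star$ is terminal and hence fibrant in $\D$. The corollary then yields
\[ \holim_{S} F\mathbf{X}_S \;\cong\; F^{\,\hocolim_S R^{\mathbf{X}_S}} \;=\; F^{\,\hocolim_S R^{A \sqcup S}}, \]
and the right-hand expression is precisely $\bot_n^R F(A)$ by Definition \ref{d:repbot}. Naturality in $A$ is inherited from the naturality already built into Corollary \ref{c:reprep}.

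The main point requiring care is the initial unpacking: one must trace through the case analysis in the definitions of $F^\J_n$ and $A \sqcup S$ to confirm that the degenerate case $S_i = \{1\}$ matches on both sides. Once this uniformization has been carried out, and once the extended $F$ has been observed to remain levelwise fibrant, the proposition becomes a direct consequence of Corollary \ref{c:reprep}, handled in essentially the same way as its Goodwillie counterpart.
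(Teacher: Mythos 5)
Your proposal matches the paper's intended argument exactly: the paper notes the identity $\bot_nF(A)=\holim_{S}F(A\sqcup S)$ and then invokes Corollary \ref{c:reprep} (implicitly with $\C$ replaced by $\C_\bot$) in the same manner as Proposition \ref{p:brrep}. You simply spell out the case analysis showing $F^\J_n(A;S)=F(A\sqcup S)$ and add the small sanity check that the extension of $F$ to $\C_\bot$ remains levelwise fibrant, both of which the paper leaves implicit.
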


\subsection{Simplicial functors} \label{tnsimp}

Finally, in this subsection we prove that $\bot_n^R$ and $T_n^R$ define simplicial functors.  

\begin{proposition} \label{trsimp}
Let $\C$ be a small simplicial subcategory of a simplicial model category that is closed under finite homotopy pushouts and tensored in finite simplicial sets, and let $\D$ be  a simplicial model category.  If $F \colon \C \rightarrow \D$ is a simplicial functor, then so is $T^R_nF$.
\end{proposition}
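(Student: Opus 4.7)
The plan is to exhibit $T_n^R F$ as a composite of simplicial functors, with alternating variance, so that the final composite is a covariant simplicial functor $\C \to \D$. Explicitly, we decompose the construction
\[ T_n^R F(A) \;=\; F^{\underset{U}{\hocolim} R^{A \ast U}} \]
as $A \mapsto (U \mapsto A\ast U) \mapsto (U \mapsto R^{A\ast U}) \mapsto \underset{U}{\hocolim} R^{A\ast U} \mapsto F^{\underset{U}{\hocolim} R^{A\ast U}}$ and verify that each arrow is a simplicial functor of the appropriate variance.

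For the first arrow, fix $U \in \P_0(\mathbf{n+1})$. The tensor $A \times U \cong \coprod_U A$ is simplicially functorial in $A$ because $\C$ is tensored over $\s$, and both the projection $A \times U \to A$ and the collapse $A \times U \to \ast_\C \times U$ are simplicially natural in $A$. The homotopy pushout inside $\C$ defining $A \ast U$ is a simplicial functor of its input diagram, by the standard construction of $\hocolim$ in a simplicial model category from coproducts and tensors with nerves of comma categories. Hence $A \mapsto A\ast U$ is a covariant simplicial functor $\C\to\C$, and these assemble over $U$ into a simplicial functor $\C \to \Fun(\P_0(\mathbf{n+1}),\C)$.

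By the co-Yoneda embedding of Remark \ref{coyonedaemb}, precomposition assembles into a simplicial map $\Map_\C(C,C') \to \int_A \Map_\C(C,A)^{\Map_\C(C',A)}$, making $C \mapsto R^C$ a contravariant simplicial functor $\C^{\op} \to \Fun(\C,\s)$. Composing with the previous paragraph yields a contravariant simplicial functor $A \mapsto (U \mapsto R^{A\ast U})$ from $\C^{\op}$ into $\Fun(\P_0(\mathbf{n+1})^{\op}, \Fun(\C,\s))$. The homotopy colimit functor $\Fun(\P_0(\mathbf{n+1})^{\op}, \Fun(\C,\s))\to \Fun(\C,\s)$ is itself simplicial, since Definition \ref{d:hocolim} presents it as a coequalizer of simplicial tensors. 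Therefore $\Phi \colon A \mapsto \underset{U}{\hocolim} R^{A\ast U}$ is a contravariant simplicial functor $\C^{\op} \to \Fun(\C,\s)$.

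Finally, the evaluated cotensor $X \mapsto F^X$ is a contravariant simplicial functor $\Fun(\C,\s)^{\op} \to \D$. Indeed, $F^X = \int_C FC^{XC}$ by Definition \ref{cotensor}, and a map $X \to Y$ in $\Fun(\C,\s)$ induces cotensor maps $FC^{YC} \to FC^{XC}$ in $\D$ that are simplicially natural in their $\s$-valued input because the cotensor bifunctor on $\D$ is simplicial; taking the end preserves this simplicial enrichment since ends are limits, as used in Proposition \ref{l:homcotensor}. The composite $F^{\Phi(-)}$ of two contravariant simplicial functors is then a covariant simplicial functor $\C \to \D$, and agrees with $T_n^R F$ by construction. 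The main subtlety throughout is the bookkeeping of variances, and the step requiring the most care is verifying the simplicial functoriality of the evaluated cotensor, since it involves an end rather than a plain cotensor; however, this reduces formally to the simplicial bifunctoriality of $D^V$ in $\D$ already established in Section \ref{sectionmodel}.
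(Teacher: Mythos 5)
Your argument is essentially correct and takes a genuinely different route from the paper's. Where the paper constructs, in a single sweep, an explicit map of simplicial sets $\Map_\C(A,B) \to \Map_\D(T_n^RFA, T_n^RFB)$ by threading the co-Yoneda embedding through the cotensor structure morphism, you instead exhibit $T_n^RF$ as a composite of simplicial functors with alternating variance: $A \mapsto (U\mapsto A\ast U) \mapsto (U\mapsto R^{A\ast U}) \mapsto \underset{U}{\hocolim}\,R^{A\ast U} \mapsto F^{(-)}$. This modular decomposition is more conceptual and would be reusable for $\bot_n^R$ without modification. The two approaches buy different things: the paper's direct construction makes visible exactly which enriched structure is being used at each stage (co-Yoneda, unit, cotensor structure map, hocolim, end), while yours reduces the proof to a sequence of cleaner, independently checkable lemmas on each building block.

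One step deserves more care than you give it, and it is precisely the step where the paper's proof does most of its work: the claim that $X \mapsto F^X = \int_C FC^{XC}$ is a simplicial functor $\Fun(\C,\s)^{\op}\to\D$. You assert this "reduces formally to the simplicial bifunctoriality of $D^V$ in $\D$" and that taking an end preserves the enrichment "since ends are limits," citing Proposition \ref{l:homcotensor}. But that proposition only shows ends commute with cotensoring by a fixed simplicial set; what you actually need is a map $\Map_{\Fun(\C,\s)}(X,Y) = \int_C \Map_\s(XC,YC) \to \Map_\D(\int_C FC^{YC}, \int_C FC^{XC})$, and assembling the levelwise maps $\Map_\s(XC,YC) \to \Map_\D(FC^{YC},FC^{XC})$ into a map of ends requires verifying compatibility with the parallel morphisms defining both ends — the same enriched-naturality check the paper carries out explicitly via the co-Yoneda embedding. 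The claim is true, and your outline points in the right direction, but it does not follow formally from Section \ref{sectionmodel}; it needs the end-compatibility argument spelled out. If you state and prove a lemma to the effect that the evaluated cotensor of Definition \ref{cotensor} is a simplicial bifunctor, your proof becomes complete and, arguably, cleaner than the original.
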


\begin{lemma} \label{joinusimp}
For a fixed finite set $U$, the join functor $- \ast U \colon \C \rightarrow \C$ is a simplicial functor.
\end{lemma}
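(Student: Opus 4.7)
The plan is to exhibit $-\ast U$ as a composite of simplicial functors. Recall that
\[ A \ast U = \hocolim\left(\ast_\C \times U \leftarrow A \times U \rightarrow A\right), \]
so the construction splits naturally into a ``build the span diagram'' step followed by a ``take the homotopy pushout'' step.

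First, I would verify that the assignment sending $A$ to the span $\ast_\C \times U \leftarrow A\times U \rightarrow A$ extends to a simplicial functor $\C \to \Fun(\Lambda,\C)$, where $\Lambda$ is the span category $(\bullet \leftarrow \bullet \to \bullet)$. To do this, first observe that since $U$ is a finite discrete set we may identify $A\times U$ with the copower $\coprod_{u\in U}A$, i.e.\ with the tensor $A\otimes U$ of $A$ with the discrete simplicial set $U$; since by hypothesis $\C$ is tensored in $\s$ (inherited from the ambient simplicial model category), Definition \ref{tensor def} immediately gives a simplicial functor $A \mapsto A \otimes U$. The functor $A \mapsto \ast_\C \times U$ is constant and thus trivially simplicial. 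The map $A\times U \to A$ (projection) and $A \times U \to \ast_\C \times U$ (induced from the terminal map $A \to \ast_\C$) are natural in $A$ in the enriched sense: both come from the simplicial structure of the tensor and the universal property of $\ast_\C$. This assembles into the desired simplicial functor into the simplicial diagram category $\Fun(\Lambda,\C)$, whose simplicial enrichment is the one constructed in Section \ref{sectionfunctors}.

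Second, I would show that the homotopy colimit defines a simplicial functor $\hocolim \colon \Fun(\Lambda,\C) \to \C$. This is where the hypothesis that $\C$ is closed under finite homotopy pushouts enters: the hocolim of a $\Lambda$-diagram in $\C$ lives in $\C$. To see that it is simplicially enriched, I would appeal to the explicit coequalizer model of $\hocolim$ given in Definition \ref{d:hocolim}, which is built entirely out of tensors with the (finite) nerves $B(a\downarrow \Lambda)^{\op}$ and coproducts, together with maps induced by morphisms in $\Lambda$. Since tensoring with a fixed simplicial set and forming coproducts/coequalizers are simplicial operations, applying $\hocolim$ to a morphism of diagrams is functorial in the simplicial mapping space $\Map_{\Fun(\Lambda,\C)}(\mathbf{X},\mathbf{Y})$, which gives the required map
\[ \Map_{\Fun(\Lambda,\C)}(\mathbf{X},\mathbf{Y}) \longrightarrow \Map_\C\bigl(\hocolim\mathbf{X},\hocolim\mathbf{Y}\bigr). \]

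Finally, composing the two simplicial functors from the previous steps produces a simplicial functor $\C \to \C$ whose value on $A$ is $A \ast U$, proving the lemma. The main technical obstacle is the second step: carefully checking that the coequalizer-of-coproducts-of-tensors formula for $\hocolim$ really does yield a simplicial functor on diagrams, rather than only a set-theoretic functor on the underlying category. Once that is in place, the first step and the composite are essentially formal consequences of the tensor structure on $\C$ and the functoriality of $\Fun(\Lambda,-)$.
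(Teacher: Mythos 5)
Your proposal is correct in spirit but takes a genuinely different route from the paper. The paper's proof works directly at the level of $n$-simplices: starting from the ordinary functoriality of $-\ast U$, it substitutes $A\otimes\Delta[n]$ for $A$, implicitly uses the natural isomorphism $(A\otimes\Delta[n])\ast U\cong(A\ast U)\otimes\Delta[n]$ (which holds because $-\otimes\Delta[n]$ is a left adjoint and so preserves the colimits from which the fiberwise join is built), and then invokes the adjunction $\Hom_\C(A\otimes\Delta[n],B)\cong\Map_\C(A,B)_n$ to assemble the required map of simplicial sets degreewise. Your decomposition into a ``span-building'' simplicial functor $\C\to\Fun(\Lambda,\C)$ followed by $\hocolim\colon\Fun(\Lambda,\C)\to\C$ is more modular and conceptually cleaner, but note that both approaches rest on exactly the same underlying fact: the interchange of $-\otimes\Delta[n]$ with the homotopy pushout. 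You correctly flag the simpliciality of $\hocolim$ as the main thing needing verification; once you unwind that verification, you find the same degreewise argument the paper uses, since one checks $\hocolim(\mathbf{X})\otimes\Delta[n]\cong\hocolim(\mathbf{X}\otimes\Delta[n])$ using the coequalizer-of-coproducts-of-tensors model. So your approach buys a cleaner factorization (reusable if one wants to vary the shape category $\Lambda$) at the cost of needing to establish that $\hocolim$ is a simplicial functor and that $\Fun(\Lambda,\C)$ carries the enrichment from Section \ref{sectionfunctors}, whereas the paper's approach buys brevity by collapsing those steps into a single degreewise computation. Both are valid; yours is not a gap but rather a more structured repackaging of the same idea.
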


\begin{proof}
Since $\C$ is tensored in finite simplicial sets, we have, for any $n \geq 0$ and any objects $A$ and $B$ in $\C$, a map 
\[ \Hom_\C(A\otimes\Delta[n],B) \rightarrow \Hom_\C \left( (A \otimes \Delta[n])\ast U, B\ast U \right). \]
As a left adjoint, tensoring with $\Delta[n]$ commutes with the join construction to yield a map
\[ \Hom_\C(A\otimes\Delta[n],B) \rightarrow \Hom_\C \left( (A\ast U) \otimes \Delta[n], B\ast U \right). \]
By a basic property of mapping spaces in simplicial categories  \cite[II.2.2]{gj}, we can consider this map instead as
\[ \Map_\C(A,B)_n \rightarrow \Map_\C(A\ast U, B\ast U)_n. \]
As a result, we have a map of simplicial sets
\[ \Map_\C(A,B) \rightarrow \Map_\C(A\ast U, B\ast U) \] 
and can conclude that $-\ast U$ is a simplicial functor.
\end{proof}

\begin{proof}[Proof of Proposition \ref{trsimp}]
We want to show that $T_n^RF \colon \C \rightarrow \D$ is a simplicial functor, so for any objects $A$ and $B$ of $\C$, we need to define compatible maps of simplicial sets
\[ \Map_\C(A,B) \rightarrow \Map_\D(T^R_nFA, T^R_nFB), \]
namely, applying the definition of $T_n^R$, maps
\[ \Map_\C(A,B) \rightarrow \Map_\D \left( \int_C FC^{\underset{U}{\hocolim} \Map_\C(A \ast U, C)}, \int_C FC^{\underset{U}{\hocolim} \Map_\C(B \ast U, C)} \right). \]

We apply the co-Yoneda embedding defined following Proposition \ref{l:homcotensor} to obtain maps
\[ \Map_\C(A,B) \rightarrow \Map_\C(A \ast U, B \ast U) \rightarrow \Map_\C(A \ast U, C)^{\Map_\C(B \ast U, C)}, \]
where $C$ is any other object of $\C$. Note here we have also used the fact that $-\ast U$ is simplicial from Lemma \ref{joinusimp}.  Using the isomorphism $\Map_\C(A,B) \cong I \otimes \Map_\C(A,B)$ and the unit map $I \rightarrow \Map_\D(FC,FC)$, we obtain a map
\[ \Map_\C(A,B) \rightarrow \Map_\D(FC,FC) \otimes \Map_\C(A \ast U, C)^{\Map_\C(B \ast U, C)}. \]

Now, applying the assignment in Definition \ref{cotensor def} to $D=E=FC$, $V=\Map_\C(A \ast U, C)$, and $W=\Map_\C(B \ast U, C)$, produces a map
\[ \Map_\D(FC,FC) \otimes \Map_\C(A \ast U, C)^{\Map_\C(B \ast U, C)} \rightarrow \Map_\D(FC^{\Map_\C(A \ast U,C)}, FC^{\Map_\C(B \ast U, C)}). \]
Precomposing with the previous map, we thus have a map
\[ \Map_\C(A,B) \rightarrow \Map_\D(FC^{\Map_\C(A \ast U,C)}, FC^{\Map_\C(B \ast U, C)}). \]

We can now apply a homotopy colimit over all $U$ in the cotensors to get a map
\[ \Map_\C(A,B) \rightarrow \Map_\D \left( FC^{\underset{U}{\hocolim}\Map_\C(A \ast U,C)}, FC^{\underset{U}{\hocolim} \Map_\C(B \ast U, C)} \right), \]
and then taking an end over all objects $C$ of $\C$ gives the desired map.
\end{proof}

\begin{proposition} 
The functor 
\[ T_n^{R} \colon \Fun(\C,\D) \rightarrow \Fun(\C, \D) \]
given by
\[ F \mapsto T_n^{R}F \]
is a simplicial functor.
\end{proposition}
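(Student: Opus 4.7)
The plan is to construct, for any pair of simplicial functors $F, G \colon \C \to \D$, a map of simplicial sets
\[ \Map_{\Fun(\C,\D)}(F, G) \longrightarrow \Map_{\Fun(\C,\D)}(T_n^R F, T_n^R G) \]
that respects identities and composition. Since the target equals $\int_A \Map_\D(T_n^R F(A), T_n^R G(A))$ by the description of mapping objects in $\Fun(\C,\D)$ as ends, the universal property reduces the problem to producing, for each object $A$ of $\C$, a map
\[ \alpha_A \colon \Map_{\Fun(\C,\D)}(F, G) \longrightarrow \Map_\D(T_n^R F(A), T_n^R G(A)), \]
and then checking that the family $\{\alpha_A\}$ satisfies the equalizer condition as $A$ varies.

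For fixed $A$, writing $X_A := \hocolim_U R^{A \ast U}$ in $\Fun(\C,\s)$ so that $T_n^R F(A) = \int_C FC^{X_A(C)}$, the map $\alpha_A$ will be built as a composite. First I would project $\Map_{\Fun(\C,\D)}(F,G) = \int_{C'} \Map_\D(FC', GC')$ onto the factor $\Map_\D(FC, GC)$ for each $C$. Next, applying Definition \ref{cotensor def} with $D = FC$, $E = GC$, $V = W = X_A(C)$, together with the unit $i \colon I \to X_A(C)^{X_A(C)}$ from Remark \ref{mapi}, yields a map
\[ \Map_\D(FC, GC) \longrightarrow \Map_\D(FC^{X_A(C)}, GC^{X_A(C)}). \]
Combining these over $C$ via the universal property of ends produces a map
\[ \Map_{\Fun(\C,\D)}(F, G) \longrightarrow \int_C \Map_\D(FC^{X_A(C)}, GC^{X_A(C)}), \]
and finally I would compose with the canonical map from this end to $\Map_\D(\int_C FC^{X_A(C)}, \int_C GC^{X_A(C)}) = \Map_\D(T_n^R F(A), T_n^R G(A))$. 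This last map exists because ends are limits (a right adjoint, as recorded in Remark \ref{brremark}), and a generalized natural transformation of $\s$-valued diagrams induces a morphism between the limits.

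To verify that the family $\{\alpha_A\}$ factors through the end $\int_A$, I would chase the diagram expressing that the two composites
\[ \Map_{\Fun(\C,\D)}(F, G) \otimes \Map_\C(A, B) \longrightarrow \Map_\D(T_n^R F(A), T_n^R G(B)) \]
coming from the $A$-side and the $B$-side agree. The key inputs are the simpliciality of $T_n^R F$ and $T_n^R G$ from Proposition \ref{trsimp} (which governs how the targets $T_n^R F(A)$ depend on $A$), the simpliciality of the assignment $A \mapsto X_A$ inherited from Lemma \ref{joinusimp} together with the simplicial functoriality of $\hocolim$, and the $\s$-bifunctoriality of the cotensor in $\D$. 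Compatibility of the resulting assignment $F \mapsto T_n^R F$ on morphisms with composition and identities is then routine, following from the functoriality of the cotensor bifunctor and of ends. The main obstacle will be this equalizer verification: although each ingredient is natural, it requires careful bookkeeping to commute the simplicial action of $\C$ past the cotensors and past the interlocking ends in $A$ and $C$. Once unpacked, it should reduce to the commutativity of a single naturality square built from the cotensor structure and the action of $\C$ on $F$, $G$, and $\{X_A\}$.
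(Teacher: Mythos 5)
Your approach is genuinely different from the paper's, and what you defer is exactly what the paper's argument is designed to bypass. The paper never unfolds $\Map_{\Fun(\C,\D)}(T_n^R F, T_n^R G)$ as an end over the object variable $A$. Instead, writing $Z(A,C) = \hocolim_U \Map_\C(A\ast U, C)$, for each fixed $C$ the paper produces a map of simplicial sets $\Map_{\Fun(\C,\D)}(F,G) \to \Map_{\Fun(\C,\D)}(F^{Z(-,C)}, G^{Z(-,C)})$ in one algebraic stroke: apply $\Map_{\Fun(\C,\D)}(-,G)$ to the counit $Z(-,C)\otimes F^{Z(-,C)}\to F$ of the $\Fun(\C,\V)$-tensor--cotensor adjunction from Propositions \ref{Fun is tensored over Fun} and \ref{Fun is cotensored over Fun}, and then transpose using the adjunctions of Definitions \ref{tensor def} and \ref{cotensor def}; taking ends over $C$ then yields the required map. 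Because the construction lives entirely in the mapping objects of $\Fun(\C,\D)$, the naturality in $A$ is packaged into the adjunction and never appears as a separate condition to check.

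Your proposal, by contrast, builds componentwise maps $\alpha_A$ and then must verify that the family satisfies the end condition over $A$; you flag this as ``the main obstacle'' and only assert that it ``should reduce to the commutativity of a single naturality square,'' which is an expectation rather than an argument, so the proof as written is incomplete. There is also a subtlety in the step producing the ``canonical map'' from $\int_C\Map_\D(FC^{X_A(C)}, GC^{X_A(C)})$ to $\Map_\D(T_n^R FA, T_n^R GA)$: to invoke the universal property of the end $\int_C FC^{X_A(C)}$, the maps from your step (3) need to be available for all pairs $(C,C')$ with the cotensoring variable decoupled from the mapping-space variable, i.e., $\Map_\D(FC,GC)\to\Map_\D(FC^{X_A(C')},GC^{X_A(C')})$, so that one has a genuine morphism of $\C^{\op}\times\C$-diagrams rather than only its diagonal. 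Both issues appear fixable, but the paper's counit-and-transpose argument dispenses with them at once, and I would recommend adopting that route.
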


\begin{proof} 
We need to define compatible morphisms of simplicial sets
\[ \Map_{\Fun(\C,\D)}(F,G) \rightarrow \Map_{\Fun(\C,\D)}(T^R_nF, T^R_nG) \]
for any simplicial functors $F, G \colon \C \rightarrow \D$.  Applying the definition of $T^R_n$, we need to define a map
\[ \Map_{\Fun(\C,\D)}(F,G) \rightarrow \Map_{\Fun(\C,\D)}\left(\int_C FC^{\underset{U}{\hocolim} \Map_\C(- \ast U, C)}, \int_C GC^{\underset{U}{\hocolim} \Map_\C(- \ast U, C)} \right), \]
where the homotopy colimits are taken over all objects $U$ of $\P_0(\mathbf{n+1})$.

To ease some notation in this proof, let us simply denote $\Map_{\Fun(\C,\D)}$ by $\Map$, and let
\[ Z(A,C):= \underset{U}{\hocolim} \Map_\C(A \ast U, C). \]
Thus, we can rewrite our desired map as 
\[ \Map(F,G) \rightarrow \Map\left(\int_C FC^{Z(-,C)}, \int_C GC^{Z(-,C)} \right). \]

To obtain such a map, first observe that the identity morphism $F \rightarrow F$ induces a map
\[ F^{Z(-,C)} \rightarrow F^{Z(-,C)}, \]
which has a corresponding adjoint morphism
\[ Z(-,C) \otimes F^{Z(-,C)} \rightarrow F. \]
Applying the mapping space into $G$, we obtain a map of simplicial sets
\[ \Map(F,G) \rightarrow \Map(Z(-,C) \otimes F^{Z(-,C)}, G), \]
from which we can apply the adjunctions of Definitions \ref{tensor def} and \ref{cotensor def} to get
\[ \Map(F,G) \rightarrow \Map(F^{Z(-,C)}, G^{Z(-,C)}). \]
We can then apply ends over $\C$ to get the map we wanted to define.
\end{proof}

It is a straightforward exercise to show that the analogous results hold for $\bot_n^R$, using the exact same arguments but replacing the fiberwise join $A \ast U$ with the construction $A \sqcup S$ of \eqref{sqcupbotdefn} and reindexing the homotopy colimits by ${\P_0(\mathbf{2})}^{\times n}$ rather than $\mathcal P_0(\mathbf{n+1})$.

\begin{proposition} 
Let $\C$ be a small simplicial category that is tensored in finite simplicial sets and closed  under finite coproducts and $\D$ be a pointed simplicial model category.  If $F \colon \C \rightarrow \D$ is a simplicial functor, then so is $\bot_n^RF$.  Furthermore, the functor
\[ \bot_n^R \colon \Fun(\C,\D) \rightarrow \Fun(\C,\D) \]
given by 
\[ F \mapsto \bot_n^RF \]
is a simplicial functor.
\end{proposition}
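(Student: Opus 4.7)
The plan is to mirror the proofs of Proposition \ref{trsimp} and its successor verbatim, making three systematic substitutions: the fiberwise join $- \ast U$ is replaced by the construction $- \sqcup S$ from \eqref{sqcupbotdefn}, the indexing category $\mathcal{P}_0(\mathbf{n+1})$ is replaced by $\J^{\times n}$, and the target of the representable's domain becomes $\C_\bot$ in place of $\C$. The argument then breaks into three pieces: an analog of Lemma \ref{joinusimp} establishing that each $- \sqcup S$ is simplicial, a direct adaptation of the proof of Proposition \ref{trsimp} to show each $\bot_n^R F$ is a simplicial functor, and a direct adaptation of the final proof of the previous subsection to show that $\bot_n^R$ is itself simplicial.

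For the first piece, I will check that for each fixed $S = (S_1, \ldots, S_n)$ in $\J^{\times n}$, the assignment $A \mapsto A \sqcup S$ defines a simplicial functor $\C \rightarrow \C_\bot$. If some $S_i = \{1\}$, then by \eqref{sqcupbotdefn} the assignment is constant at $\bot$, and its enrichment map $\Map_\C(A,B) \rightarrow \Map_{\C_\bot}(\bot, \bot)$ can be taken to factor through the unit picking out $\Id_\bot$, making it trivially simplicial. Otherwise $A \sqcup S$ is a finite coproduct of copies of $A$ and $\ast_\C$; since $\C$ is a small simplicial category closed under finite coproducts, the universal property of the enriched coproduct supplies the required simplicial structure maps $\Map_\C(A,B) \rightarrow \Map_\C(A \sqcup S, B \sqcup S)$.

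With this lemma in hand, the analog of the proof of Proposition \ref{trsimp} goes through line by line: apply the co-Yoneda embedding of Remark \ref{coyonedaemb} to $- \sqcup S$, then apply the cotensor assignment of Definition \ref{cotensor def} with $D = E = FC$, $V = \Map_{\C_\bot}(A \sqcup S, C)$, and $W = \Map_{\C_\bot}(B \sqcup S, C)$, then take the homotopy colimit over $S \in \J^{\times n}$ in the cotensor exponents, and finally take the end over $C \in \C_\bot$ to produce the required simplicial map $\Map_\C(A,B) \rightarrow \Map_\D(\bot_n^R F A, \bot_n^R F B)$. The proof that $F \mapsto \bot_n^R F$ is a simplicial endofunctor of $\Fun(\C,\D)$ is then a mechanical copy of the last proof of the previous subsection, using the same identity-induced adjoint map $Z(-,C) \otimes F^{Z(-,C)} \rightarrow F$, now with $Z(A,C) := \underset{S}{\hocolim}\,\Map_{\C_\bot}(A \sqcup S, C)$.

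The main obstacle I expect is purely bookkeeping around the basepoint object $\bot \in \C_\bot$: one must verify that the extension of $F$ to $\C_\bot$ with $F(\bot) = \star$ is itself a simplicial functor, that the end appearing in Definition \ref{d:repbot} is taken over $\C_\bot$ consistently on both sides of each simplicial map, and that contributions from those $S$ with some $S_i = \{1\}$ are absorbed correctly so that the homotopy colimit over $\J^{\times n}$ behaves compatibly with the rest of the construction. Once these conventions are pinned down, no new ideas beyond those in Proposition \ref{trsimp} and its successor are required.
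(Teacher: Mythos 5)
Your proposal is correct and takes essentially the same approach as the paper, which itself proves this proposition only by noting it is ``a straightforward exercise'' obtained from the proofs of Proposition~\ref{trsimp} and its successor by replacing $A \ast U$ with $A \sqcup S$ and reindexing over $\J^{\times n}$. Your explicit sketch of the analog of Lemma~\ref{joinusimp} (splitting into the constant case when some $S_i=\{1\}$ and the finite-coproduct case otherwise) and your attention to the bookkeeping around $\C_\bot$ fill in precisely the details the paper leaves to the reader.
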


\end{document}